\renewcommand{\emptyset}{\varnothing}
\newtheorem{theorem}{Theorem}[section]
\newtheorem{lemma}[theorem]{Lemma}
\newtheorem{proposition}[theorem]{Proposition}
\newtheorem{corollary}[theorem]{Corollary}
\newtheorem{remark}[theorem]{Remark}
\newcommand{\R}{\mathbbm R}
\newcommand{\FBE}{\varphi_{\gamma}^{\mbox{\small{FB}}}}
\newcommand{\DRE}{\varphi_{\gamma}^{\mbox{\small{DR}}}}
\newcommand{\FBEk}{\varphi_{\gamma_k}^{\mbox{\small{FB}}}}
\newcommand{\DREk}{\varphi_{\gamma_k}^{\mbox{\small{DR}}}}
\newcommand{\FBEkk}{\varphi_{\mu_k}^{\mbox{\small{FB}}}}
\newcommand{\limout}{\limsup}
\newcommand{\liminn}{\liminf}
\newcommand{\tophi}{\xrightarrow[\varphi]{}}
\DeclareMathOperator{\prox}{prox}
\DeclareMathOperator{\proj}{proj}
\DeclareMathOperator{\dom}{dom}
\DeclareMathOperator{\epi}{epi}
\DeclareMathOperator{\dist}{dist}
\DeclareMathOperator*{\argmin}{arg\,min}
\newcommand{\x}{\xi}
\title{Understanding the Douglas-Rachford splitting method through the lenses of  Moreau-type envelopes}
\author{Felipe Atenas     \thanks{School of Mathematics \& Statistics,
                           The University of Melbourne,
                             Australia.
                            E-mail:~\href{href:felipe.atenas@unimelb.edu.au}
                                          {felipe.atenas@unimelb.edu.au} 
                                          }
                        \thanks{The author was supported in part by Australian Research Council grant DP230101749, and FAPESP grant 2019/20023-1}
}
\begin{document}
\maketitle

\begin{abstract}
We analyze the Douglas-Rachford splitting method for weakly convex optimization problems, by the token of the Douglas-Rachford envelope, a merit function akin to the Moreau envelope.  First, we use epi-convergence techniques to show that this artifact approximates the original objective function via epigraphs. Secondly, we present how global convergence and local linear convergence rates for Douglas-Rachford  splitting can be obtained using such envelope, under mild regularity assumptions. The keystone of the convergence analysis is the fact that the Douglas-Rachford envelope satisfies a sufficient descent inequality alongside the generated sequence, a feature that allows us to use arguments usually employed for descent methods. We report numerical experiments that use weakly convex penalty functions, which are comparable with the known behavior of the method in the convex case.
\end{abstract}

\paragraph{Keywords.} 
Nonconvex optimization, weak convexity, Douglas-Rachford splitting, Moreau envelope, epi-convergence.

\section{Introduction and motivation}\label{sec1}

Decomposition techniques are fundamental in dealing with complex large-scale systems. Decomposition can be achieved by breaking up a problem in simpler  pieces, depending on the type of variables and constraints. Separability can also stem from structural properties, such as smoothness and convexity, or lack thereof.  Furthermore, splitting methods have been successfully applied in signal processing, image processing, and machine learning,  see \cite{combettes2007douglas,cai2010split,combettes2011proximal,boyd2011distributed,glowinski2017splitting} for a few illustrations of applications.

Operator splitting methods decompose complex structured problems into smaller
individual subproblems. A solution of the original problem is obtained by iteratively
solving separate subproblems for each involved function or operator. Prominent instances suitable for composite optimization are the Douglas-Rachford (DR), the forward-backward (FB), and the Peaceman-Rachford splitting methods, the Alternating Direction Method of
Multipliers (ADMM), and the Spingarn's partial inverse method; we refer to
\cite{lions1979splitting,spingarn1983partial,eckstein1989splitting,boyd2011distributed,davis2016convergence}  
and references therein for  further details.
When applied to optimization problems, these methods were  originally studied for linear, and
more generally, convex programming.

The cornerstone of most operator splitting methods is the proximal point algorithm (PPA), introduced in \cite{martinet-1970} and thoroughly
studied in \cite{rockafellar1976monotone} to find
a zero of a maximal monotone operator. In the context of convex optimization, for proper lower semicontinuous (lsc) convex 
functions $f, g: \R^d \to \R \cup \{+\infty\}$, the problem boils down to
minimizing the composite function $f + g$. As explained in 
\cite{eckstein1989splitting}, one DR splitting iteration (given in the scheme
\eqref{DR:scheme} below) amounts to applying the PPA with constant stepsize equal to $1$, to an auxiliary maximal monotone operator. In this manner, the DR splitting method provably inherits properties of the PPA in the convex case, such as convergence and  convergence rates.

We focus on problems involving the sum of two terms, the more classical setting. More recently, extensions to the sum of more than two functions/monotone operators have been proposed, see
\cite{eckstein2017simplified,malitsky2022resolvent} for some examples. In this way, we consider the following minimization problem \begin{equation}
	\label{problem:primal-DRS} \min_{x \in \R^d} \varphi(x) = f(x) +
	g(x), \end{equation}
where $f,g: \R^d \to \R \cup \{+\infty\}$ are proper lsc functions, not necessarily convex.

Given a relaxation parameter $\lambda >0$, a stepsize $\gamma
>0,$ and an initial point $z^0 \in \R^d$, we define one iteration of the (relaxed) DR splitting method  \cite{themelis2020douglas} as follows: 
\begin{equation} \label{DR:scheme}  \left\{\begin{aligned} x^k  &\in  \prox_{\gamma
			f}(z^k)& \\ y^k  &\in  \mbox{prox}_{\gamma g}(2x^k - z^k)& \\
		z^{k+1} & =  z^k + \lambda(y^k - x^k).& \end{aligned}\right.  \end{equation} Note that for $\lambda =1$, scheme \eqref{DR:scheme} reduces to plain DR splitting.  As stated,  one iteration amounts to performing  two  successive proximal steps, 
computed separately for each term of the sum, followed by a fixed point/correction step. In the district of proximal splitting methods, the DR method has a close neighbor,  the FB method. This method defines the iterates as $x^{k+1} = \prox_{\gamma g}\big(x^k - \gamma \nabla f(x^k)\big)$, and it is also known as the proximal-gradient method.

The iterative approach \eqref{DR:scheme} has a long history.  Lions and Mercier in \cite{lions1979splitting}  studied convergence properties and speed of convergence for splitting methods to find a zero of the sum of two maximally monotone operators
defined on a Hilbert space. As a special case, this analysis covers  optimization problems of the form
\eqref{problem:primal-DRS} for proper lsc convex functions, where the corresponding operators are the subdifferentials of convex analysis of the involved functions. 
Under mild regularity assumptions, the DR sequence $\{z^k\}$
converges to some $z^{\star}$, for which  $x^{\star} = \prox_{\gamma
	f}(z^{\star})$ solves \eqref{problem:primal-DRS}, and both
$\{x^{k}\}$ and $\{y^{k}\}$ converge to $x^{\star}$ \cite[Theorem
3.15, Proposition 3.40]{eckstein1989splitting}.  Additionally, if $f$ is
differentiable and strongly convex, with  Lipschitz
continuous gradient, then $\{z^{k}\}$ $Q-$linearly converges to $z^{\star}$, and
$\{x^k\}$ $Q-$linearly converges to the unique solution to
\eqref{problem:primal-DRS}. For varying stepsizes  and inexact proximal
evaluations, see \cite[Theorem 7]{eckstein1992douglas}.    As for operators, the authors in \cite{dao2019adaptive} analyze the convergence of the DR method to find a zero of the sum of a strongly monotone and a weakly monotone operator, providing linear rates of convergence.

Typically, the analysis of DR splitting relies on the study of the sequence of distances from the iterates to the solution set, and its monotonicity. However, the sequence of
function values is not monotonically nonincreasing necessarily, and for this reason 
\cite{patrinos2014douglas} introduced
a special merit function, called \textit{Douglas-Rachford envelope} (DRE), that shows a monotone behavior alongside iterations.  For convex composite problems with a convex quadratic term, 
the DRE is real-valued and continuously differentiable. Furthermore,
one DR splitting iteration corresponds to one variable metric gradient step applied to the DRE \cite[equation (17)]{patrinos2014douglas}.
In a manner similar to how the Moreau envelope sheds a light on the PPA,
the DRE gives an insight on DR. In particular, because the DR splitting method provides (sufficient) descent for the  DRE  merit function, a variable metric 
gradient method for the DRE yields 
complexity estimates and rates of convergence for the DR iterates. A point
crucial for this type of  analysis is that DRE critical points are
related to minimizers of the original convex problem. 
For convex composite objective functions with a (sufficiently) smooth and strongly convex term, a similar approach is adopted in \cite{patrinos2014forward} to 
analyze the FB splitting method by means of a suitably defined envelope (see \eqref{FBE}),  called \emph{forward-backward envelope (FBE)}.

The literature is much more scarce
for nonconvex problems, the setting considered in this work. The authors in
\cite{li2016douglas,li2015global} study the sum of a differentiable function with Lipschitz continuous gradient, 
and a proper lsc function with an easily computable proximal operator. By defining a merit function related to the DRE,  subsequential convergence to a critical point is obtained, as well as eventual convergence rates under some extra assumptions, namely, that the functions satisfy the K{\L} inequality \cite{kurdyka1998gradients,bolte2007clarke}, a concept related to error bounds \cite[Theorem 4.1]{li2018calculus}. These two notions are often used in the literature to establish local rates of convergence \cite{atenas2023unified,luo1993error,robinson1999linear,frankel2015splitting,davis2015convergence,bolte2017error,li2018calculus}.

The purpose of this work is to provide a deeper understanding of splitting methods by using Moreau-type envelopes, in particular, our focus being the DR splitting method. We first start examining how the epigraph of the FBE and DRE approximate the epigraph of the composite objective function, shedding a light on why these envelopes are quite successful not only in explaining the convergence of the splitting methods in nonconvex settings, but also in defining  appropriate approximations of the objective function. This is a consequence of the nature of the envelopes themselves, since they can be interpreted as extensions of the Moreau envelope (defined in \eqref{def:Moreau}) tailored to composite optimization problems. We follow the approach developed in \cite{burke2013epi}.

The second part of our contributions, independent of the first part, refers to deriving local convergence rates of the DR splitting method for weakly convex optimization using envelopes. This is achieved by combining the machinery developed in \cite{themelis2020douglas} for the DRE with 
the unifying framework for descent methods from \cite{atenas2023unified}. Our results resemble the ones briefly referred without proof in \cite[page 15]{themelis2020douglas}
for semialgebraic functions, and the ones presented in \cite{li2016douglas} using a different Lyapunov-type merit function.  To the best of the author's knowledge, this is the first result of local rates of convergence for function values (and iterates) of the DR method for nonconvex problems.

The remainder of this work is organized as follows. We introduce the notation and 
background on variational analysis in Section~\ref{section:background}. Next, in Section~\ref{s:epi-approx} we investigate the envelopes for the FB and the DR methods from a theoretical point of view. In particular, we establish that both FBE and DRE epi-approximate the objective function of interest, and explore some consequences of this fact. In Section~\ref{s:conv-DR} we show the necessary components to follow the ideas of \cite{atenas2023unified} to obtain convergence and local rates of convergence for the nonconvex DR splitting method, under the assumption of a mild regularity condition. We then proceed in Section~\ref{section:numerical} to show the numerical performance of the DR method when applied to a regularized linear regression problem using weakly convex penalties.  We close in Section~\ref{section-final} with some concluding remarks and possible extensions.


\section{Background material} \label{section:background}

\subsection{Notation and basic definitions}

Unless stated otherwise, $\langle \cdot, \cdot \rangle$ denotes an inner product  in $\R^d$, and $\| \cdot \|$ its induced norm.  A function $\varphi: \R^d \to \R \cup \{+\infty\}$ is called proper whenever
a point $x \in \R^n$ exists such that $\varphi(x) < +\infty.$ The set of such points satisfying this condition, the domain of $\varphi$, is denoted by $\dom(\varphi)$. The epigrapgh of $\varphi$ is defined as $\epi(\varphi) = \{ (x,\alpha) \in \R^d \times \R: \varphi(x) \le \alpha \}$. 

For a set $C \subseteq \R^d$, and a sequence of sets $\{C^k \subseteq \R^d\}$, the inner limit of $\{ C^k\}$, denoted $\displaystyle\liminf_{k\to +\infty} C^k$, is the set of all limit points of sequences $\{x^k \in C^k\}$, that is, \[ \liminf_{k\to+\infty} C^k = \{x \in \R^d | \: \exists \{x^k\} \mbox{ such that } C^k \ni x^k \to x \mbox{ as } k \to +\infty\}.\] The outer limit of $\{ C^k\}$, denoted $\displaystyle\limsup_{k\to+\infty} C^k$, is the set of all accumulation points of sequences $\{x^k\}_k$ with elements in $C^k$ throughout the respective convergent subsequence, that is, \[ \limsup_{k\to+\infty} C^k = \{x \in \R^d | \: \exists \{x^k\}  \mbox{ and a subsequence } C^{k_j} \ni x^{k_j} \to x \mbox{ as } j \to +\infty\}.\]  Note that $\displaystyle\liminf_{k\to+\infty} C^k \subseteq \displaystyle\limsup_{k\to+\infty} C^k$. We say $\{C^k\}$ set-converges to $C$ if $\displaystyle\limout_{k\to+\infty} C^k \subseteq C \subseteq \displaystyle\liminf_{k\to+\infty} C^k$. A sequence of functions $\{\varphi^k: \R^d \to \R \cup \{+\infty\}\}$ is said to epi-approximate $\varphi$ (cf. \cite[eq. (3.1)]{burke2013epi}) if $\{\varphi^k\}$ epi-converges to $\varphi$, that is,  $\{\epi(\varphi^k)\}$ set-converges to $\epi(\varphi)$. As an example, any proper lsc convex function can be epi-approximated by its Moreau envelope, defined below in \eqref{def:Moreau} (see \cite[Corollary 4.8]{burke2013epi}). In Section~\ref{s:epi-approx}, we prove that both the FBE and the DRE epi-approximate  $ f + g$ in a broader setting.

For a proper function $\varphi: \R^n \to \R \cup \{+\infty\}$, and $x \in \dom(\varphi)$, we denote by  $\hat{\partial} \varphi$ the Fr\'echet (or regular)
subdifferential of $\varphi$, defined as \begin{equation*} \hat{\partial}
	\varphi(x) = \left\{ v \in \R^d : \liminf_{y \to x}
	\frac{\varphi(y)-\varphi(x) - \langle v, y - x \rangle}{\|
		y - x\|} \ge 0\right\}.  \end{equation*}   The limiting (or general) subdifferential of $\varphi$, denoted $\partial \varphi$, is defined as     \begin{equation*} \partial \varphi(x) = \limsup_{y \tophi x} \hat{\partial}\varphi(x),
\end{equation*} where $y \tophi x$ denotes convergence in the attentive sense, that is, $y \to
x $ and $\varphi(y) \to \varphi(x)$. If $\varphi$ is proper lsc convex, then the Fr\'echet and limiting subdifferentials coincide with the subdifferential of convex analysis, namely, the set of vectors $v \in \R^n$ such that for all $y \in \R^n$, $\varphi(y) \ge \varphi(x) + \langle v, y - x \rangle$. When $\varphi$ is differentiable, then $\hat{\partial}\varphi(x) = {\partial}\varphi(x) = \{\nabla f (x)\}$. A notion that is crucial for calculus rules for subdifferentials is the horizon subdifferential. We denote by $\partial^{\infty} \varphi( x)$ the set of horizon subgradients $v$ of $\varphi$ at $ x \in \dom(\varphi)$, vectors for which there exist sequences $x^k \tophi x$, and $t_k \downarrow 0$, such that $v^k \in \partial \varphi (x^k)$, and $t_k v^k \to v$.

We say a point $x$ is critical for $\varphi$ if $0 \in \partial \varphi (x).$ This condition is necessary for a point to be a local minimum. For a critical point $x$ of $\varphi$, the real number $\varphi(x)$ is called critical value.

A special family of differentiable functions is the continuously differentiable functions with Lipschitz continuous gradient. We say a function $f: \R^d \to \R$ is $L_f$-smooth for $L_f>0$, if $f$ is  continuously differentiable, and for all $x,y \in \R^d$, $\|\nabla f(x) - \nabla f(y)\| \le L_f \| x-y\|$. In the next lemma, we state a powerful tool supplied by $L$-smooth functions \cite[Lemma 2.64]{bauschke2011convex}.

\begin{lemma}[Descent lemma] \label{descent-lemma}
	Let $f: \R^d \to \R \cup \{+\infty\}$ be $L_f$-smooth function. Then, for all $x,y \in \R^d$, \begin{equation} 
		|f(y)-f(x)-\langle \nabla f(x), y-x \rangle | \le \dfrac{L_f}{2}\|x-y\|^2.
	\end{equation}
\end{lemma}

For the following family of  functions with a \textit{benign} form of nonconvexity, we can exploit the convex analysis machinery. A function $\varphi: \R^d \to \R\cup\{+\infty\}$ is said to be $\rho-$weakly convex, for $\rho >0$, if $\varphi(\cdot) + \frac{\rho}{2}\|\cdot\|^2$ is convex. Weakly convex functions can be characterized in a variety of forms \cite{davis2019stochastic}. In particular, a function $\varphi$ is $\rho-$weakly convex, if and only if, for any $y,x \in \mbox{dom}(\varphi),$ and $v \in \partial \varphi(x),$\begin{equation} \label{prox-subgrad}
	\varphi(y) + \frac{\rho}{2}\| y-x \|^2 \ge \varphi(x) + \langle v, y- x\rangle.
\end{equation}
Examples of weakly convex functions are convex functions ($\rho=0$) and $L$-smooth functions ($\rho=L$) \cite[Proposition 2.4]{atenas2023unified}. More properties and examples of weakly convex functions can be found in \cite{hoheiselproximal,davis2019stochastic}. See also Section~\ref{section:numerical} for two examples of weakly convex functions used  as regularizers in regression problems.

\subsection{Proximal operator and Moreau envelope}

As stated in \eqref{DR:scheme}, one iteration of DR splitting is constructed using the
proximal operator, defined as follows. For a point $x \in \R^d$, $\prox_{\gamma \varphi}(x)$ denotes the proximal point operator of $\varphi$ for a stepsize parameter $\gamma >0$ evaluated at $x$, and defined by \begin{equation} \label{def:prox}
	\prox_{\gamma \varphi}(x) = \argmin_{y \in \R^n} \left\{ \varphi(y) + \frac{1}{2\gamma}\| y - x\|^2\right\}.
\end{equation}
The optimal value function of the minimization problem in \eqref{def:prox} defines the Moreau envelope of $\varphi$ with stepsize $\gamma >0$,  denoted by $e_{\gamma}\varphi$. More precisely, 
\begin{equation} \label{def:Moreau}e_{\gamma}\varphi(x) = \inf_{y \in \R^d} \left\{ \varphi(y) + \frac{1}{2\gamma}\|y - x\|^2\right\}. \end{equation} In the general case, the operator $\prox_{\gamma \varphi}: \R^n \rightrightarrows \R^n$ could be empty-valued and $e_{\gamma}\varphi$ might take the value $-\infty$. The proximal operator and the Moreau envelope are well-defined for prox-bounded functions. We say $\varphi$ is prox-bounded if $\varphi(\cdot) + \frac{1}{2\gamma}\|\cdot\|^2$ is bounded from below for some $\gamma >0$, and the supremum $\gamma_{\varphi}>0$ of such parameters is called the threshold of prox-boundedness. In this manner, if $\varphi$ is a proper lsc prox-bounded function with threshold $\gamma_{\varphi} > 0$, then for any $\gamma \in (0, \gamma_{\varphi})$,  the images of $\prox_{\gamma \varphi}$ are nonempty and compact, and $e_{\gamma}\varphi$ is finite-valued \cite[Theorem 1.25]{rockafellar2009variational}. A particular case is when $\varphi$ is a proper lsc convex function, in which case $\gamma_{\varphi}=+\infty$,  and $\prox_{\gamma \varphi}$ is a single-valued mapping \cite[Theorem 12.12, Theorem 12.17]{rockafellar2009variational}. As a consequence, when $f: \R^d \to \R$ is a $L_f$-smooth function, and $g: \R^d \to \R \cup \{+\infty\}$ is a $\rho$-weakly convex function, both proximal steps in \eqref{DR:scheme} are uniquely defined for any $\gamma \in ( 0, \min\{L_f^{-1}, \rho^{-1}\} )$.

The proximal operator enjoys a plethora of calculus rules. The following result is standard for convex functions, and since it can be easily proven using the optimality conditions of the proximal operator, it also holds in a broader setting. We resort to this lemma to prove the next result.

\begin{lemma} \label{prox-linear-translation}
	Let $G: \R^d \to \R \cup \{ +\infty\}$ be a  proper lsc prox-bounded function with threshold $\gamma_G >0$. Define $G_a(\cdot) = G(\cdot) + \langle a, \cdot \rangle $ for some $a \in \R^d$. Then, for all $\gamma \in (0, \gamma_G)$, \begin{equation*}
		\prox_{\gamma G_a}(\cdot) = \prox_{\gamma G}(\cdot - \gamma a).
	\end{equation*}
\end{lemma}

Likewise, the next result is valid in the convex case \cite[Theorem 23.47]{bauschke2011convex}, and we extend it here for weakly convex functions. 

\begin{lemma} \label{prox-convergence}
	Let $G: \R^d \to \R \cup \{ +\infty\}$ be a  proper lsc $\rho-$weakly convex function, $\{x^k\}$ a sequence and $\bar x \in \R^d$, such that $x^k \to \bar x$. Given a sequence of real numbers $\{\gamma_k\}$ such that $\gamma_k \downarrow 0$, then \begin{equation*}
		\prox_{\gamma_k G}(x^k) \to \proj_{\overline{\dom(\partial G)}}(\bar x).
	\end{equation*} 
\end{lemma}

\begin{proof}
	Denote $\mu_k^{-1} = \gamma_k^{-1} - \rho$, in such a way that for all $k$ sufficiently large, $\mu_k > 0$ and $\mu_k \downarrow 0$. Then \begin{equation*}
		\begin{array}{rcl}
			\prox_{\gamma_k G}(x^k) & = & \displaystyle\argmin_{x \in \R^d} \left\{ G(x) + \dfrac{\rho}{2}\| x - x^k\|^2 + \dfrac{1}{2\mu_k}\|x - x^k\|^2 \right\} \\
			& = & \prox_{\mu_k(G_{\rho}(\cdot) - \rho 
				\langle x^k , \cdot \rangle)}(x^k)\\
			& = & \prox_{\mu_kG_{\rho}(\cdot) }(x^k+ \rho\mu_k x^k),
		\end{array}
	\end{equation*} where $G_{\rho}(\cdot) = G(\cdot) + \frac{\rho}{2}\| \cdot \|^2$, and the last line follows from Lemma~\ref{prox-linear-translation}. Since $G_{\rho}$ is convex, and $x^k + \rho \mu_k x^k \to \bar{x}$, the conclusion follows from \cite[Theorem 23.47]{bauschke2011convex} and $\dom(\partial G) = \dom(\partial G_{\rho})$.
\end{proof}


\section{Epigraphic approximation through envelopes} \label{s:epi-approx}

The approximation theory for optimization problems via set-convergence has been explored in \cite{royset2020set}. In this section, we first introduce the aforesaid envelopes for the FB and DR methods, and state some properties. Then, we prove that the FBE epi-approximates the original function and, as a consequence, that the DRE enjoys the same property. The relevance of this result is that these envelopes not only coincide with the objective function at a global minimum (see \eqref{DRE:properties}), but also their epigraphs asymptotically coincide with the epigraph of the objective function, a much stronger notion of approximation as discussed in \cite{royset2020set}. The foundation of this behavior is the fact that the Moreau envelope itself shows it. In some sense, it should be expected that \emph{any} method of proximal type satisfies the epi-approximation property.

\subsection{Moreau-type envelopes for splitting methods}

As stated in the introduction, the FB splitting method has $x \mapsto \prox_{\gamma g}(x-\gamma\nabla f(x))$ as iteration operator. A merit function resembling the Moreau envelope can be defined for the FB method, corresponding to a value function associated with the itetarion operator. More precisely, given  $x\in \R^d$ and $\gamma>0$, the FBE \cite{patrinos2013proximal,themelis2018forward}  is defined as  \begin{equation} \label{FBE} \FBE(x) =
	\min_{y \in \R^d}\left\{ f(x) + \langle  \nabla f(x), y - x \rangle +
	g(y) + \dfrac{1}{2\gamma}\|y-x\|^2\right\}.  \end{equation} Problem \eqref{FBE} can be seen as an approximation of 
$e_{\gamma\varphi}(x)$: $f$ is replaced by its first-order Taylor model
of at $x$. Observe that any minimizer of the problem in \eqref{FBE} defines the next step of the FB splitting method. As an optimal value function, the FBE is a real-valued locally Lipschitz penalty  \cite[Proposition 4.2]{themelis2018forward} when $f: \R^d \to \R$ is $L_f$-smooth, $g: \R^d \to \R \cup \{+\infty\}$ is proper lsc prox-bounded with parameter $\gamma_g >0$, and $\gamma \in (0, \min\{L_f^{-1},\gamma_g\} )$. Similarly, for any $z\in \R^n$ and $\gamma > 0$, the DRE \cite{patrinos2014douglas,themelis2020douglas} is defined, for $x = \prox_{\gamma f}(z)$, as  \begin{equation} \label{DRE} \DRE(z) =
	\min_{y\in\R^d}\left\{ f(x) + \langle  \nabla f(x), y - x \rangle +
	g(y) + \dfrac{1}{2\gamma}\|y-
	x\|^2\right\}.  \end{equation}  Problem \eqref{DRE} can be interpreted as an approximation of 
$e_{\gamma\varphi}(x)$ as well. Namely, in the sum $\varphi =
f + g$, $f$ is replaced by its first-order Taylor model
of at $x = \prox_{\gamma f}(z)$. Under similar assumptions for the FB method, the DRE is also finite-valued and locally Lipschitz continuous \cite[Proposition 3.2]{themelis2020douglas}. Furthermore, it is straightforward that the following relationship between envelopes holds \begin{equation} \label{DRE-FBE}
	\DRE(z) = \FBE(\prox_{\gamma f}(z)).
\end{equation} An advantage of the DRE (and the FBE) over other merit functions defined for splitting methods, is that it corresponds to a natural extension of the Moreau envelope in the following sense. Since $f$ is $L_f-$smooth,
$\prox_{\gamma f}$ is a single-valued operator \cite[Proposition
2.3(i)]{themelis2020douglas}, and from the update rule of $x^k$ in
\eqref{DR:scheme},  \begin{equation} \label{u-OC} x^k = \prox_{\gamma f}(z^k) \iff 0 =
	\gamma\nabla f(x^k) + x^k - z^k.  \end{equation} Combining this last identity with the update rule of $y^k$ in
\eqref{DR:scheme}, then $y^k$ is a solution to the following problem  \begin{equation} \label{DRE:moreau} \min_{y \in \R^d} \left\{ g(y) +
	\frac{1}{2\gamma}\|y - \big(x^k - \gamma \nabla f(x^k)\big)\|^2
	\right\}.  \end{equation} The optimal value of this problem is exactly $e_{\gamma g}(x^k - \gamma \nabla f(x^k))$, and the same holds for the FBE.  After expanding squares in the above expression, we end up with the form
of the DRE presented above. 

Another interpretation of the DRE is available when rewriting it as a penalty function in explicit form \cite{themelis2020douglas}. For this purpose, we first reformulate  problem \eqref{problem:primal-DRS} as
follows \begin{equation*} \min_{x,y \in \R^d} f(x) + g(y) \quad
	\mbox{ s.t. } \quad x - y =0.  \end{equation*} The augmented Lagrangian of this reformulation is, for $\beta
>0:$ \begin{equation*} \mathcal{L}_{\beta}(x,y,w) = f(x) + g(y)
	+ \langle w, x-y \rangle + \dfrac{\beta}{2}\| x - y\|^2, \end{equation*}  where $w \in \R^d$ is a Lagrange multiplier associated with the constraint $x-y
= 0$. Therefore, by the definition of the DRE, it holds for all $k$: \begin{equation}
	\label{DR:Lagrangian} \DRE(z^k) = \mathcal{L}_{\gamma^{-1}}\bigl(x^k, y^k,
	\gamma^{-1}(x^k-z^k)\bigr).  \end{equation}

Regarding convergence, one of the crucial properties of the DRE (and FBE) is that it preserves the minimizers of the original objective function, whenever they exist \cite[Theorem 3.4]{themelis2020douglas}, \cite[Theorem 4.4]{themelis2018forward}. More precisely, when $f$ is $L_f$-smooth, $g$ is proper lsc prox-bounded with threshold $\gamma_g$, and $\argmin \varphi \neq \emptyset$, then for any $\gamma \in (0, \min\{L_f^{-1},\gamma_g\})$, \begin{equation} \label{DRE:properties}
	\displaystyle\inf \varphi = \inf\FBE = \inf \DRE ,\quad  \argmin  \varphi =
	\argmin \FBE  =
	\prox_{\gamma f}(\argmin \DRE). 
\end{equation} In particular, whenever $\varphi$ is bounded below, so are
the FBE and the DRE. Although crucial to obtain convergence to critical points of the original problem via envelopes, the above identities only regard the behavior \emph{at} minimizers of $\varphi$ and the envelopes.  In the following section, we prove that these envelopes actually approximate the objective function via epigraphs, a more robust property.

\subsection{Epigraphic approximation via FBE and DRE}

We start the analysis by proving that the FBE of $\varphi = f +g $ epi-approximates $\varphi$ in a prox-friendly nonconvex convex setting, and then extend the results for the DRE. First, we prove a technical result from which can be deduced that the FBE is an \emph{outer} epi-approximation of the objective function.

\begin{lemma} \label{lemma:epi-inclusion}
	
	Suppose $f: \R^d \to \R $ is a $L_f$-smooth function, $g: \R^d \to \R \cup \{+\infty\}$ is a proper lsc prox-bounded function, and $\varphi = f + g$. Then,\begin{equation*}
		\epi(\varphi) + \liminn_{\gamma \downarrow 0}\epi\left(\dfrac{1}{2\gamma}\| \cdot \|^2\right) \subseteq \liminn_{\gamma \downarrow 0}\epi(\FBE).
	\end{equation*}
\end{lemma}

\begin{proof}
	Let $(x,\xi) \in \epi(\varphi)$ and  $(y, \eta) \in \liminn_{\gamma \downarrow 0}\epi(\|\cdot\|^2/(2\gamma))$. Take any $\gamma_k\downarrow 0$ and $(y^k, \eta^k) \to (y, \eta)$, such that $(y^k, \eta^k) \in \epi(\|\cdot\|^2/(2\gamma_k))$ for all $k$. Consequently, \[ \varphi(x) + \frac{1}{2\gamma_k}\|y^k\|^2 \le \xi + \eta^k.\]  In view of Lemma~\ref{descent-lemma}, it holds for all $z \in \R^d$,\[ f(x+y^k)  + \langle \nabla f(x + y^k), z -(x+y^k)\rangle + g(z) \le \varphi(z) + \frac{L}{2}\| z- (x+y^k)\|^2. \] Define $\{\mu_k\}$ such that  $\gamma_k^{-1} = \mu_k^{-1} + L$ for all $k$, that is, $\gamma_k = \frac{\mu_k}{1+ \mu_k L}$.  Note that for all sufficiently large $k$, $\mu_k > 0$, and thus $\mu_k \downarrow 0$. Combine the latter estimate with  the definition of the FBE to obtain, for all sufficiently large $k$, \[ \begin{array}{rcl}
		\FBEkk(x+y^k) &\le& \displaystyle\inf_{z  \in \R^d} \left\{ \varphi(z) + \frac{1}{2}\left(\frac{1}{\mu_k} + L \right)\|z - (x+y^k) \|^2 \right\} \\
		&=& \displaystyle\inf_{z  \in \R^d} \left\{ \varphi(z) + \frac{1}{2\gamma_k} \|z - (x+y^k) \|^2 \right\} \\
		& \le & \varphi(x) + \dfrac{1}{2\gamma_k} \|y^k \|^2 \\
		& \le & \xi + \eta^k.
	\end{array}\] Hence, $(x+y^k, \xi+ \eta^k) \in \epi(\FBEkk)$, and $(x+y^k, \xi+ \eta^k) \to (x+y, \xi + \eta)$ as $\mu_k \downarrow 0$, or equivalently, $(x+y, \xi + \eta) \in \liminn_{\mu \downarrow 0} \epi\big(\FBE\big)$.
\end{proof}

Next, we prove that the FBE is an \emph{inner} epi-approximation of $\varphi$ for weakly convex problems, and thus it defines an epi-approximation (cf. \cite[Proposition 4.5]{burke2013epi}).

\begin{theorem}[Epi-approximation: forward-backward envelope] \label{FBE-epi-convergence}
	
	Suppose $f: \R^d \to \R $ is a $L_f$-smooth function, $g: \R^d \to \R \cup \{+\infty\}$ is a proper lsc $\rho$-weakly convex function, and $\varphi = f + g$. Then, for any $\gamma_k \downarrow 0$, $\{\FBEk\}$ epi-approximates $\varphi$.
\end{theorem}

\begin{proof}
	First,  it holds that \[\liminn_{\gamma \downarrow 0} \epi(\FBE) \supseteq \epi(\varphi).\] Indeed, since $\|0\|^2/(2\gamma) \le 0$, then $(0,0) \in \epi\left(\dfrac{1}{2\gamma}\| \cdot \|^2\right)$, and thus Lemma~\ref{lemma:epi-inclusion} gives the inclusion. Next, we prove \[\limout_{\gamma \downarrow 0} \epi(\FBE) \subseteq \epi(\varphi).\]  Let  $(\bar{x}, \bar{\alpha}) \in \limout_{\gamma \downarrow 0} \epi(\FBE)$. Then, up to a subsequence, there exist $\gamma_k \downarrow 0$, $x^k \to \bar{x}$, and $\alpha_k \to \bar{\alpha}$, such that $\FBEk(x^k) \le \alpha_k$. Setting $z^k = \prox_{\gamma_k g}(x^k - \gamma_k \nabla f(x^k ))$, note that $z^k \to \proj_{\overline{\dom(\partial g)}}(\bar{x})$ in view of Lemma~\ref{prox-convergence} and $x^k - \gamma_k \nabla f(x^k ) \to \bar{x}$. Observe that, actually,  $z^k \to \bar{x}$. Indeed,  by way of contradiction,  assume that $\{z^k - x^k\}$ stays bounded away from $0$. By definition of $z^k$,  \[ f(x^k) + \langle \nabla f(x^k),  z^k - x^k \rangle + g(z^k) +  \frac{1}{2\gamma_k}\|z^k - x^k\|^2 \le \alpha_k,\] and Lemma~\ref{descent-lemma}, \begin{equation*}  
		f(x^k) + \langle \nabla f(x^k),  z^k - x^k \rangle \ge f(z^k) - \dfrac{L}{2} \| z^k - x^k \|^2,
	\end{equation*} it follows that  \begin{equation} \label{epi-aux-2}  \varphi(z^k) + \frac{1}{2}\left(\frac{1}{\gamma_k}-L\right)\|x^k - z^k\|^2 \le \alpha_k.\end{equation} Since $g$ is $\rho-$weakly convex, there exists an affine function $\ell_{\varphi}$ that globally lower bounds $\varphi + \frac{\rho}{2}\|\cdot\|^2$. In this way,  \eqref{epi-aux-2} implies \begin{equation*}
		\ell_{\varphi}(z^k) + \dfrac{1}{2}\left( \dfrac{1}{\gamma_k} - L \right)\| z^k - x^k \|^2\le \alpha_k + \dfrac{\rho}{2}\|z^k\|^2.
	\end{equation*} Take $\mu_k \in \R$ such that $\mu_k^{-1} = \gamma_k^{-1} - L$. For all sufficiently large $k$, $\mu_k > 0$, and also $\mu_k \downarrow 0$. Rearranging terms in the estimate above and dividing by $\| z^k - x^k\|$, it yields \begin{equation*}
		\dfrac{1}{2\mu_k}\| z^k - x^k \|   \le \dfrac{\alpha_k-\ell_{\varphi}(z^k) }{\| z^k - x^k\|}  + \dfrac{\rho}{2}\dfrac{\|z^k\|^2}{\| z^k - x^k\|}.
	\end{equation*} Since both $\{z^k\}$ and $\{\alpha_k\}$ are convergent sequences, the assumption on $\{z^k-x^k\}$ implies that the right-hand side of the last estimate is bounded, while the left-hand side is unbounded, yielding a contradiction. Therefore, $z^k - x^k \to 0$, and thus $z^k \to \bar x$. Furthermore, \eqref{epi-aux-2} implies that $\{\varphi(z^k)\}$ is upper bounded, and since $\varphi$ is lsc, then $\varphi(\bar{x}) \le \displaystyle\liminf_{k\to +\infty} \varphi(z^k) < +\infty$. Hence $\bar{x} \in \dom(\varphi)$. In order to show that $(\bar{x}, \bar{\alpha}) \in \epi(\varphi)$,   note that \eqref{epi-aux-2} is equivalent to  \begin{equation} \label{epi-aux-3}\dfrac{1}{2\mu_k}\|x^k - z^k\|^2 \le \alpha_k - \varphi(z^k).\end{equation}  Up to a subsequence if necessary, $\varphi(z^k) \to \displaystyle\liminf_{k \to +\infty} \varphi(z^k)$. Then, from \eqref{epi-aux-3}, 
	it holds that $0  \le \bar{\alpha} - \displaystyle\liminf_{k \to +\infty} \varphi(z^k) \le \bar{\alpha} - \varphi(\bar x)$, and thus $(\bar x, \bar \alpha) \in \epi(\varphi)$.

	In this manner, \[ \limout_{\gamma \downarrow 0} \epi(\FBE) \subseteq \epi(\varphi) \subseteq \liminn_{\gamma \downarrow 0} \epi(\FBE),\] and the conclusion follows.
\end{proof}

In view of the intrinsic relationship between the forward-backward and the Douglas-Rachford splitting methods, the DRE enjoys the same type of epi-approximation properties of the FBE, as stated in next result.

\begin{corollary}[Epi-approximation: Douglas-Rachford envelope] \label{DRE-epi-convergence}
	Suppose $f: \R^d \to \R $ is a $L_f$-smooth function, $g: \R^d \to \R \cup \{+\infty\}$ is a proper lsc $\rho$-weakly convex function, and $\varphi = f + g$. Then, for any $\gamma_k \downarrow 0$, $\{\DREk\}$ epi-approximates $\varphi$.
\end{corollary}

\begin{proof}
	Let $(z,\alpha) \in \limout_{\gamma \downarrow 0}\epi\big(\DRE\big)$,  then there exist $\gamma_k \downarrow 0$ and, up to a subsequence, $(z^k, \alpha^k) \in \epi\big(\DREk\big)$,  $z^k \to z$, and $\alpha^k \to \alpha$.
	In view of \eqref{DRE-FBE}, \begin{equation*}
		\epi\big(\DRE\big) =  \epi\big(\FBE\circ \prox_{\gamma f}\big),
	\end{equation*} and thus $(\prox_{\gamma_k f}(z^k), \alpha^k) \in \epi\big(\FBEk\big).$ Taking the limit, Theorem~\ref{FBE-epi-convergence} and Lemma~\ref{prox-convergence} imply \begin{equation*}
		(z, \alpha) \in \limout_{\gamma \downarrow 0} \epi\big(\FBEk\big) = \epi(\varphi).
	\end{equation*} On the other hand, take $(z,\alpha) \in \epi(\varphi)$. In view of Theorem~\ref{FBE-epi-convergence}, there exist $\gamma_k \downarrow 0$,  $z^k \to z$ and $\alpha^k \to \alpha$, such that $(z^k,\alpha^k)\in\epi\big(\FBEk\big)$. Set $w^k = z^k + \gamma_k \nabla f(z^k)$, then $z^k = \prox_{\gamma_k f}(w^k)$, and $w^k \to z$. In this way, \eqref{DRE-FBE} yields $(w^k,\alpha^k)\in\epi\big(\DREk\big)$, and thus $(z,\alpha) \in \liminn_{\gamma\downarrow0}\epi\big(\DRE\big)$. Hence, \[ \limout_{\gamma \downarrow 0}\epi\big(\DRE\big)\subseteq \epi(\varphi) \subseteq \liminn_{\gamma\downarrow0}\epi\big(\DRE\big),\] and the conclusion follows.
\end{proof}

Epi-approximation of the objective function via envelopes complements the set of relationships in \eqref{DRE:properties}. More precisely, the handy characterization of epi-convergence provided in \cite[Proposition 7.2]{rockafellar2009variational} guarantees that for any $\bar x \in \argmin \varphi$,  any approximation $x^k \to \bar x$ with $\gamma_k \downarrow 0$, $\FBEk$ and $\DREk$ approximate $\inf \varphi$ from above: \begin{equation*}
	\displaystyle\liminf_{k \to +\infty} \FBEk(x^k) \ge \varphi(\bar x), \mbox{ and } \displaystyle\liminf_{k \to +\infty} \DREk(x^k) \ge \varphi(\bar x).
\end{equation*} Furthermore, it also certifies the existence of a sequence $y^k \to \bar x$ that approximates $\inf \varphi$ from below: \begin{equation*}
	\displaystyle\limsup_{k \to +\infty} \FBEk(y^k) \le \varphi(\bar x), \mbox{ and } \displaystyle\limsup_{k \to +\infty} \DREk(y^k) \le \varphi(\bar x).
\end{equation*} Therefore, for such approximation, \begin{equation*}
	\displaystyle\lim_{k \to +\infty} \FBEk(y^k) = \displaystyle\lim_{k \to +\infty} \DREk(y^k) = \inf \varphi.
\end{equation*} 

Another consequence of Theorem~\ref{FBE-epi-convergence} and Corollary~\ref{DRE-epi-convergence} is that subgradients of $\varphi$ can be approximated using gradients of the envelopes, under further regularity assumptions. We follow the approach in \cite[Theorem 5.5]{aragon2023coderivative} to first characterize the subdifferentials of the FBE and the DRE.

\begin{lemma} \label{subdiff-FBE}
	Suppose that $f: \R^d \to \R$ is a twice continuously differentiable $L_f$-smooth function with  Lipschitz continuous Hessian, and $g: \R^d \to \R \cup \{+\infty\}$ is a proper lsc prox-bounded function with threshold $\gamma_g >0$. Then, for any $\gamma \in (0, \min\{L_f^{-1},\gamma_g\})$, \begin{equation*}
		\partial \FBE(x) = \gamma^{-1}\big( I - \gamma \nabla^2 f(x) \big)\Big[x - \mbox{conv}\Big(\prox_{\gamma g}\big(x - \gamma \nabla f(x)\big)\Big)\Big]. 
	\end{equation*}
	
\end{lemma}

\begin{proof}
	In view of \cite[eq. (58)]{aragon2023coderivative}, for all $x \in \R^d$, \begin{equation*}
		\FBE(x) + A_{\gamma g}\big(x - \gamma \nabla f(x)\big) = f(x) + \dfrac{1}{2\gamma}\|x\|^2 - \langle \nabla f(x), x\rangle, 
	\end{equation*} where $A_{\gamma g}$ is the Asplund function of $g$ with parameter $\gamma$, a convex and globally Lipschitz function whenever $\gamma \in (0, \gamma_g)$  \cite[Proposition 2.7]{aragon2023coderivative}. Since $\FBE$ is a locally Lipschitz function and $\nabla f$ is Lipschitz continuous, then $\partial^{\infty} \FBE (x) = \{0\}$ and $\partial^{\infty} \big( A_{\gamma g}(\cdot - \gamma \nabla f(\cdot)) \big) (x) = \{0\}$, and thus the transversality condition $\partial^{\infty} \FBE (x) \cap  - \partial^{\infty} \big( A_{\gamma g}(\cdot - \gamma \nabla f(\cdot)) \big) (x) = \{0\}$ trivially holds. Therefore, the sum rule for subdifferentials, \cite[Exercise 8.8]{rockafellar2009variational} and \cite[Lemma 1]{atenas2024avoids} give \begin{equation*}
		\partial \FBE(x) + \partial \big(A_{\gamma g}(\cdot - \gamma \nabla f(\cdot))\big)(x) = \gamma^{-1}\big( I - \gamma \nabla^2 f(x) \big)x. 
	\end{equation*} Furthermore, due to \cite[Theorem 10.6]{rockafellar2009variational} and \cite[Proposition 2.7]{aragon2023coderivative}, \begin{equation*}
		\partial \big(A_{\gamma g}(\cdot - \gamma \nabla f(\cdot))\big)(x) = \gamma^{-1}(I - \gamma \nabla^2 f(x)) \mbox{conv}\Big(\prox_{\gamma g}\big(x - \gamma \nabla f(x)\big)\Big).
	\end{equation*} Combining the last two equations gives the desired result.
\end{proof}

An expression for the subdifferential of $\DRE$ can be deduced then by resorting to the relationship between the FBE and the DRE in \eqref{DRE-FBE}.

\begin{corollary} \label{subdiff-DRE}
	Suppose that $f: \R^d \to \R$ is a twice continuously differentiable $L_f$-smooth function with  Lipschitz continuous Hessian, and $g: \R^d \to \R \cup \{+\infty\}$ is a proper lsc prox-bounded function with threshold $\gamma_g >0$. Then, for any $ \gamma \in (0, \min\{L_f^{-1},\gamma_g\})$, \begin{equation*}
		\partial \DRE(z) = \gamma^{-1} \nabla \prox_{\gamma f}(z) \big( I - \gamma \nabla^2 f(\prox_{\gamma f}(z))\big) \big( \prox_{\gamma f}(z) - R_{\gamma}(z) \big),
	\end{equation*} where $ R_{\gamma}(z)=\mbox{conv}\Big[\prox_{\gamma g}\big(\prox_{\gamma f}(z) - \gamma \nabla f(\prox_{\gamma f}(z))\big)\Big]$.
\end{corollary}

\begin{proof}
	Since $A_{\gamma g}$ is convex, then it is subdifferentiable regular. Applying the chain rule  \cite[Theorem 10.6]{rockafellar2009variational} to the identity \eqref{DRE-FBE} to conclude by using Lemma~\ref{subdiff-FBE} and \cite[Lemma 1]{atenas2024avoids}
\end{proof}

Under the assumptions of Lemma~\ref{subdiff-FBE} and Corollary~\ref{subdiff-DRE}, if in addition we assume that $g$ is weakly convex, then the FBE and the DRE are continuously differentiable. Indeed, for all sufficiently small $\gamma >0$, for all $x \in \R^d$, \begin{equation*}
	\nabla \FBE(x) = \gamma^{-1}\big( I - \gamma \nabla^2 f(x) \big)\Big[x - \prox_{\gamma g}\big(x - \gamma \nabla f(x)\big)\Big], 
\end{equation*} and for any $z \in \R^d$, $\nabla \DRE(z)$ takes the form \begin{equation*}
	 \gamma^{-1}\nabla \prox_{\gamma f}(z)\Big( I - \gamma \nabla^2 f\big(\prox_{\gamma f}(z)\big) \Big)\Big[\prox_{\gamma f}(z) - R_{\gamma}(z)\Big]
\end{equation*} where $ R_{\gamma}(z) = \prox_{\gamma g}\big(\prox_{\gamma f}(z) - \gamma \nabla f(\prox_{\gamma f}(z))\big)$. Furthermore, these gradients provide and \emph{outer} approximation of the subdifferential of $\varphi$. In fact, in view of \cite[Corollary 8.47]{rockafellar2009variational}, Theorem~\ref{FBE-epi-convergence} and Corollary~\ref{DRE-epi-convergence}, for any $x \in \dom(\varphi) $ and $v \in \partial \varphi (x)$, there exists a sequence $\{y^k\}$ such that, up to a subsequence, \begin{equation*}
	y^k \to x, \FBEk(y^k) \to \varphi(x), \mbox{ and } \nabla \FBEk(y^k)  \to v,
\end{equation*} and, analogously,  there exists a sequence $\{x^k\}$ such that, up to a subsequence, \begin{equation*}
	x^k \to x, \DREk(x^k) \to \varphi(x), \mbox{ and } \nabla \DREk(x^k)  \to v.
\end{equation*}

In this section, we have examined variational analysis properties of the proximal-type envelopes defined for the FB and DR methods. In the following section, we focus on algorithmic consequences of the properties of these envelopes.


\section{Convergence of Douglas-Rachford splitting through envelopes} \label{s:conv-DR}

 Independently of Section~\ref{s:epi-approx}, we now proceed to examine envelopes from a different perspective, and focus on the behavior of the iterates generated by the DR method. Different from the PPA and the FB splitting method, the sequence generated by the DR splitting method does not define a monotone sequence of function values. In this regard, the DRE arises as a device that allows to analyze the DR method using arguments of descent methods.

As mentioned in \cite[Remark 3.1]{themelis2020douglas},  if $f: \R^d\to \R$ is $L_f$-smooth and $g: \R^d\to \R\cup\{+\infty\}$ is proper lsc, then  the scheme in  \eqref{DR:scheme} is
well-defined for any $0 < \gamma < \frac{1}{L_f}$, as long as problem \eqref{problem:primal-DRS} has a nonempty set of solutions. Under the same assumptions, for the sequence $\{(x^k, y^k, z^k)\}$ generated by  \eqref{DR:scheme}, it also holds \begin{equation} \label{DRE:sandwich-i}
	\DRE(z^k) \le \varphi (x^k),
\end{equation} \begin{equation} \label{DRE:sandwich-ii}
	\varphi(y^k) \le \DRE(z^k) - \dfrac{1-\gamma L_f}{2\gamma} \|x^k - y^k\|^2.
\end{equation} As a consequence, any limit point $(x^{\star}, y^{\star}, z^{\star})$ of the sequence $\{(x^k, y^k,
z^k)\}$, whenever they exist, satisfy 
\begin{equation} \label{DRE:sandwich-iii}\DRE(z^{\star}) \le \varphi(x^{\star}), \quad \mbox{ and }
	\quad \varphi(y^{\star}) \le \DRE(z^{\star}) - \dfrac{1-\gamma
		L_f}{2\gamma}\|x^{\star}-y^{\star}\|^2. \end{equation}

\subsection{Douglas-Rachford splitting as a descent method}

Convergence properties for the DR splitting method can be obtained using customary arguments of descent methods and the DRE. The authors in \cite{themelis2020douglas} construct the tools to employ the line of reasoning detailed in 
\cite{attouch2013convergence, atenas2023unified}. The first main ingredient is to prove that the DR splitting method is a descent method for the DRE, which means $\{\DRE(z^k)\}$ satisfies a sufficient descent estimate  \cite[Theorem
4.1]{themelis2020douglas}. The second ingredient is an estimate for a subgradient of the Augmented Lagrangian in \eqref{DR:Lagrangian}, briefly mentioned in \cite{themelis2020douglas}, that let us understand DR splitting as a descent method for the DRE in the sense of \cite{atenas2023unified}. The next result summarizes these two key properties.

\begin{proposition}[Descent properties of DR splitting] \label{DR:descent} 
	
	Suppose $f: \R^d \to \R $ is a $L_f$-smooth function, $g: \R^d \to \R \cup \{+\infty\}$ is a proper lsc prox-bounded function, and for $\varphi = f + g$, $\argmin \varphi \neq \emptyset$. For $\lambda \in (0,2)$, and $ \gamma \in \left(0, \frac{2-\lambda}{2L_f}\right)$, consider the iterates $\{(x^k, y^k, z^k)\}$ generated by
	\eqref{DR:scheme}. Then, for all $k \ge 1$,    \begin{equation} \label{DRE-decrease}
		\DRE(z^k)\ge \DRE(z^{k+1}) + c\max\left\{ \frac{1}{(1+ \gamma
			L_f)^2}\|z^k -z^{k+1}\|^2, \|x^k -x^{k+1}\|^2 \right\}, \end{equation} where \begin{equation*} c = \frac{2-\lambda}{2\lambda\gamma} -
		\frac{L_f}{\lambda}>0.  \end{equation*} Furthermore, for $\x^k = \bigl(x^k, y^k, \gamma^{-1}(x^k-z^k)\bigr)$, it holds
	\begin{equation} \label{subgrad:Lagrangian} s^k := \bigl(\gamma^{-1}(x^k-y^k), 0
		,x^k-y^k\bigr) \in \hat{\partial} \mathcal{L}_{\gamma^{-1}}(\x^k), \end{equation} and
	\begin{equation} \label{subgrad:aux} \|s^k\| =
		\lambda^{-1}\sqrt{\gamma^{-2}+1}\| z^{k+1} - z^k\|.  \end{equation}
	
\end{proposition}

\begin{proof}
	
	First,  the estimate \eqref{DRE-decrease} for $\|z^k - z^{k+1}\|^2$
	corresponds to \cite[(4.2)]{themelis2020douglas} for $\sigma_{\varphi_1}= -L$. The estimate for $\|x^k
	- x^{k+1}\|^2$ appears in the proof of \cite[Theorem
	4.1]{themelis2020douglas}. For the second result, the subdifferential of $\mathcal{L}_{\gamma^{-1}}$ at $\x = \x^k$
	can be computed taking partial derivatives with respect to the different
	components of the primal-dual vector $\x$, as follows:
	\begin{itemize} 
		\item Since $f$ is differentiable, $\hat{\partial}_x\mathcal{L}_{\gamma^{-1}}(x,y,w) =
		\{\nabla f(x) + w + \gamma^{-1}(x-y)\}$. Then, taking $(x,y,w) = \x^k$, and using \eqref{u-OC}, it follows that\begin{equation*} \hat{\partial}_x\mathcal{L}_{\gamma^{-1}}(\x^k) = \{\nabla f(x^k) + 	\gamma^{-1}(x^k - z^k) + \gamma^{-1}(x^k - y^k)\}  = \{\gamma^{-1}(x^k
			- y^k)\}.\end{equation*}
		\item From the optimality condition of $y^k$ for problem \eqref{DRE}, $0 \in \hat{\partial} g(y^k) + \nabla f
		(x^k) + \gamma^{-1}(y^k - x^k)$, and thus it follows that $\partial_y \mathcal{L}_{\gamma^{-1}}(\x^k) = \hat{\partial} g (y^k) - \gamma^{-1}(x^k-z^k) + \gamma^{-1}(y^k - x^k) \ni 0$.
		\item Since $\mathcal{L}_{\gamma^{-1}}$ only depends on $w$ linearly, then  $\hat{\partial}_w \mathcal{L}_{\gamma^{-1}}(\x^k) = \{\gamma^{-1}(x^k - z^k)\}$.

	\end{itemize}
	
	Therefore, from $\hat{\partial} \mathcal{L}_{\gamma^{-1}}(\x^k) = \hat{\partial}_x
	\mathcal{L}_{\gamma^{-1}}(\x^k) \times \hat{\partial}_y
	\mathcal{L}_{\gamma^{-1}}(\x^k) \times \hat{\partial}_w
	\mathcal{L}_{\gamma^{-1}}(\x^k)$, identity \eqref{subgrad:Lagrangian} follows. To prove \eqref{subgrad:aux},
	note that due to the update rule for $\{z^k\}$ in \eqref{DR:scheme}, it
	follows \[\begin{array}{rcl} \|s^k \|^2 & = & \gamma^{-2}\|x^k - y^k\|^2 +
		\|x^k - y^k\|^2   \\ & = & (\gamma^{-2}+1)\|x^k - y^k\|^2\\ & = &
		(\gamma^{-2}+1)\lambda^{-2}\|z^k - z^{k+1}\|^2.  \end{array}\] \end{proof}

The following result is not  a straightforward application of the general scheme in \cite{atenas2023unified}. 
As made clear in the proof, in our setting, the DRE functional decrease is measured only in terms of some components of the norm of the primal-dual term $\|\x^{k}-\x^{k+1}\|^2$.
This feature prevents us to directly apply the unifying convergence theory of
\cite{atenas2023unified}.

The next result states subsequential convergence of DR splitting to critical points of $\varphi$, retrieving \cite[Theorem
4.3]{themelis2020douglas}. Here, we 
provide an alternative proof, based on the developments in \cite{atenas2023unified}. This shows that it is possible to see DR splitting as a method of descent through appropriate lenses.

\begin{theorem}[Subsequential convergence of DR splitting] \label{DR:convergence}
	
	Suppose $f: \R^d \to \R $ is a $L_f$-smooth function, $g: \R^d \to \R \cup \{+\infty\}$ is a proper lsc prox-bounded function, and for $\varphi = f + g$, $\argmin \varphi \neq \emptyset$. For $\lambda \in (0,2)$, and $ \gamma \in \left(0, \frac{2-\lambda}{2L_f}\right)$. Then any bounded sequence $\{(x^k, y^k, z^k)\}$
	generated by \eqref{DR:scheme} satisfies, 
	
	\begin{itemize} \item[(i)] The sequence $\{{\DRE(z^k)}\}$
		monotonically converges to a critical value $\varphi^{\star}$ of $\varphi$, and the sequence $\{f(x^k) + g(y^k)\}$ converges to the same value $\varphi^{\star}.$
		\item[(ii)] $x^k - y^k \to 0$, $x^k - x^{k+1} \to 0$, $y^k - y^{k+1} \to 0$, and $z^k - z^{k+1} \to 0$, as $k \to + \infty$. \item[(iii)] All cluster
		points of $\{x^k\}$ and $\{y^k\}$ coincide,
		and are also critical points of $\varphi$, with same critical value
		$\varphi^{\star}=\displaystyle\lim_{k\to \infty} \DRE(z^k)=\displaystyle\lim_{k\to \infty} f(x^k) + g(y^k)$.  \end{itemize}
	
\end{theorem}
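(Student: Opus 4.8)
The plan is to use the Douglas--Rachford envelope $\DRE$ as a Lyapunov function and run the descent-method argument of \cite{atenas2023unified} on the primal--dual sequence $\x^k=\bigl(u^k,v^k,\gamma^{-1}(u^k-s^k)\bigr)$, exploiting the identity \eqref{DR:Lagrangian} between $\DRE(s^k)$ and $\mathcal{L}_{\gamma^{-1}}(\x^k)$. First I would establish item (i) together with part of (ii). Since $c>0$, the estimate \eqref{DRE-decrease} shows $\{\DRE(s^k)\}$ is nonincreasing; as $\{s^k\}$ is bounded and $\DRE$ is continuous by Proposition~\ref{DRE:properties}(i), this sequence is bounded below, hence converges to some $\varphi^\star$. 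Summing \eqref{DRE-decrease} over $k$ then gives $\sum_k\max\{(1+\gamma L)^{-2}\|s^k-s^{k+1}\|^2,\|u^k-u^{k+1}\|^2\}\le c^{-1}\bigl(\DRE(s^0)-\varphi^\star\bigr)<\infty$, so $s^k-s^{k+1}\to 0$ and $u^k-u^{k+1}\to 0$.

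To finish (ii), I would use the update rule in \eqref{DR:scheme}: $v^k-u^k=\lambda^{-1}(s^{k+1}-s^k)\to 0$, hence $u^k-v^k\to 0$, and $v^k-v^{k+1}=(v^k-u^k)+(u^k-u^{k+1})+(u^{k+1}-v^{k+1})\to 0$. I would also record, via the optimality condition \eqref{u-OC}, that the dual block equals $\gamma^{-1}(u^k-s^k)=-\nabla\varphi_1(u^k)$; since $\{u^k\}$ is bounded and $\nabla\varphi_1$ is $L$-Lipschitz, this block is bounded and has vanishing increments, so $\x^{k+1}-\x^k\to 0$.

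For (iii), let $K$ index a subsequence with $(u^k,v^k,s^k)\to(u^\star,v^\star,s^\star)$. From $u^k-v^k\to 0$ we get $u^\star=v^\star$, so the cluster-point sets of $\{u^k\}$ and $\{v^k\}$ coincide. Continuity of $\DRE$ gives $\DRE(s^\star)=\lim_{k\in K}\DRE(s^k)=\varphi^\star$, and Proposition~\ref{DRE:sandwich}(iii) together with $u^\star=v^\star$ yields $\varphi(v^\star)\le\DRE(s^\star)\le\varphi(u^\star)=\varphi(v^\star)$, so $\varphi(u^\star)=\varphi(v^\star)=\varphi^\star$; this identifies $\varphi^\star$ as the common value of $\varphi$ at every cluster point (criticality follows below). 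For the second statement in (i), I would expand $\mathcal{L}_{\gamma^{-1}}$ in \eqref{DR:Lagrangian} and use $\gamma^{-1}(u^k-s^k)=-\nabla\varphi_1(u^k)$ to get $\varphi_1(u^k)+\varphi_2(v^k)=\DRE(s^k)+\langle\nabla\varphi_1(u^k),u^k-v^k\rangle-\tfrac{1}{2\gamma}\|u^k-v^k\|^2$; the last two terms vanish ($\{\nabla\varphi_1(u^k)\}$ bounded, $u^k-v^k\to 0$), so $\varphi_1(u^k)+\varphi_2(v^k)\to\varphi^\star$.

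It remains to prove criticality. From the optimality of $v^k$ in \eqref{DRE}, $\zeta^k:=-\nabla\varphi_1(u^k)-\gamma^{-1}(v^k-u^k)\in\partial_F\varphi_2(v^k)$, and along $K$, $\zeta^k\to-\nabla\varphi_1(u^\star)$; moreover $\varphi_2(v^k)=\bigl(\varphi_1(u^k)+\varphi_2(v^k)\bigr)-\varphi_1(u^k)\to\varphi^\star-\varphi_1(u^\star)=\varphi_2(v^\star)$ by the value identity above and continuity of $\varphi_1$. Closedness of the limiting subdifferential then gives $-\nabla\varphi_1(u^\star)\in\partial_L\varphi_2(v^\star)$, and since $\varphi_1\in C^1$ the sum rule yields $0\in\nabla\varphi_1(u^\star)+\partial_L\varphi_2(u^\star)=\partial_L\varphi(u^\star)$, i.e.\ $u^\star=v^\star$ is critical for $\varphi$. (Equivalently one could pass $g^k\in\partial_F\mathcal{L}_{\gamma^{-1}}(\x^k)$ from \eqref{subgrad:Lagrangian} to the limit, using $g^k\to 0$ by \eqref{subgrad:aux}, $\x^{k+1}-\x^k\to 0$, and $\mathcal{L}_{\gamma^{-1}}(\x^k)=\DRE(s^k)\to\varphi^\star=\mathcal{L}_{\gamma^{-1}}(\x^\star)$, then read off the three blocks.) The hard part is precisely this last passage to the limit: the descent inequality \eqref{DRE-decrease} controls only part of $\|\x^{k+1}-\x^k\|^2$, so I must separately recover the missing increments and, more importantly, secure the value convergence $\varphi_2(v^k)\to\varphi_2(v^\star)$ (equivalently $\mathcal{L}_{\gamma^{-1}}(\x^k)\to\mathcal{L}_{\gamma^{-1}}(\x^\star)$) needed to upgrade the Fréchet inclusion at $v^k$ into a limiting inclusion at $v^\star$ — and this is exactly where Proposition~\ref{DRE:sandwich}(iii) is indispensable and where the proof genuinely departs from a black-box use of \cite{atenas2023unified}.
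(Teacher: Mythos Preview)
Your proposal is correct and follows essentially the same approach as the paper: descent of $\DRE$ forces the increments in (ii) to vanish, the sandwich from Proposition~\ref{DRE:sandwich}(iii) identifies $\varphi^\star=\varphi(u^\star)=\DRE(s^\star)$, the Lagrangian identity \eqref{DR:Lagrangian} delivers $\varphi_1(u^k)+\varphi_2(v^k)\to\varphi^\star$, and value convergence $\varphi_2(v^k)\to\varphi_2(v^\star)$ then permits passage to the limiting subdifferential. The only cosmetic differences are that the paper bounds $\{\DRE(s^k)\}$ below via Proposition~\ref{DRE:properties}(ii) rather than via compactness, and it carries out the criticality step through the three blocks of $\partial_L\mathcal{L}_{\gamma^{-1}}$ (your parenthetical alternative) rather than directly through $\partial_L\varphi_2$.
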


\begin{proof}
	
	Since $\varphi$ is
	bounded from below, then  \eqref{DRE:properties} implies $\DRE$
	is also bounded from below, and thus  so is the sequence
	$\{{\DRE(z^k)}\}$. Furthermore, the descent condition
	\eqref{DRE-decrease} implies that $\{{\DRE(z^k)}\}$ is a
	nonincreasing real sequence. Thus, there exists $\varphi^{\star} \in \R$ such
	that  ${\DRE(z^k)} \to \varphi^{\star}.$ In turn,
	\eqref{DRE-decrease} then yields $z^k - z^{k+1} \to 0,$ and $x^k - x^{k+1}
	\to 0$. Therefore,  $x^k - y^k \to 0$, due to the update rule for $\{z^k\}$
	in \eqref{DR:scheme}, and thus $\{x^k\}$ and $\{y^k\}$ have the same limit
	points.  Moreover, from \eqref{subgrad:aux}, it follows that $s^k \to 0$, and  $y^k - y^{k+1} = y^k - x^k + x^k - x^{k+1} + y^{k+1} - y^{k+1}\to 0$.  This proves item (i). As for item (ii), let $x^{ \star}, y^{\star},$ and $z^{\star}$ be accumulation points
	of the sequences $\{x^k\}, \{y^k\}$, and $\{z^k\}$, respectively. Note that
	$y^{\star} = x^{\star}$,  $\varphi(y^{\star}) = \varphi(x^{\star})$, and up to a subsequence, $y^k
	\to y^{\star}$. Following the arguments in \cite{themelis2020douglas} and using \eqref{DRE:sandwich-ii}, \eqref{DRE:sandwich-i} and \eqref{DRE:sandwich-iii}, it follows that \[\varphi(y^{\star}) \le   \displaystyle\liminf_{k \to +\infty} \varphi(y^k) \le \displaystyle\limsup_{k\to +\infty } \varphi(y^k) \le  \displaystyle\limsup_{k\to +\infty } \DRE(z^k) = \DRE(z^{\star}) \le 
	\varphi(x^{\star}) =  \varphi(y^{\star}). \]
	Therefore, $\varphi(y^k) \to \varphi(y^{\star})$, and $\mathcal{L}_{\gamma^{-1}}(\x^k) = \DRE(z^k)
	\to \varphi(y^{\star}) $, with
	$\varphi(x^{\star})=\varphi(y^{\star})= \DRE(z^{\star}) = \varphi^{\star}$. Note that since $\{x^k - z^k\}$ is bounded, and $x^k - y^k \to 0$, then $\langle \gamma^{-1}(x^k- z^k), x^k - y^k \rangle + \frac{1}{2\gamma}\|x^k - y^k\|^2 \to 0$, and thus\[f(x^k) + g(y^k) = \mathcal{L}_{\gamma^{-1}}(\x^k) - \left( \langle \gamma^{-1}(x^k- y^k), x^k - y^k \rangle + \frac{1}{2\gamma}\|x^k - y^k\|^2\right) \to \varphi^{\star}.\] Furthermore, from the definition of the Augmented Lagrangian, $\mathcal{L}_{\gamma^{-1}}(x^{\star}, x^{\star},
	\gamma^{-1}(x^{\star}-z^{\star})) =  \varphi(x^{\star})$,  that is, $\{\mathcal{L}_{\gamma^{-1}}(\x^k)\}$ 
	converges to $\mathcal{L}_{\gamma^{-1}}(x^{\star}, x^{\star},
	\gamma^{-1}(x^{\star}-z^{\star}))$, as $\x^k \to (x^{\star}, x^{\star},
	\gamma^{-1}(x^{\star}-z^{\star}))$. Therefore, taking the limit in
	\eqref{subgrad:Lagrangian} (passing through a subsequence if necessary) yields
	\[0 \in \partial\mathcal{L}_{\gamma^{-1}}(x^{ \star}, x^{\star},
	\gamma^{-1}(x^{\star}- z^{\star})),\] which is equivalent to the following
	criticality conditions \begin{equation*} \left\{\begin{aligned} 0  &=  \nabla
			f(x^{\star}) + \gamma^{-1}(x^{\star}-z^{\star}) +
			\gamma^{-1}(x^{\star}-y^{\star})& \\ 0  &\in  \partial g(y^{\star}) -
			\gamma^{-1}(x^{\star}-z^{\star}) + \gamma^{-1}(x^{\star}-y^{\star})& \\ 0 & =
			y^{\star}- x^{\star}& \end{aligned}\right.  \end{equation*} Adding the first two
	relations,  the final result follows, as
	we obtain $0 \in
	\nabla f(x^{\star}) + \partial g
	(x^{\star})$.

\end{proof}

\begin{remark} Some comments about Theorem~\ref{DR:convergence} are in order.
	\begin{itemize}
		\item By items (i) and (ii), the sequence of DRE functional values
		$\DRE(z^k)$ converges monotonically to a critical value of $\varphi$. 
		By contrast, the sequence of functional values
		$f(x^k) + g(y^k)$ converges to the same critical value, but not necessarily in a monotone manner. 
		
		\item Boundedness of the iterates $\{(x^k, y^k, z^k)\}$ generated by
		\eqref{DR:scheme} can be ensured by assuming that $\varphi$ has bounded level sets \cite[Theorem 4.3(iii)]{themelis2020douglas}, which is equivalent to $\DRE$ having the same property \cite[Theorem 3.4(iii)]{themelis2020douglas}.
		
	\end{itemize}
\end{remark}

Before analyzing the rate of convergence of the DR splitting method, it is worth mentioning that it is possible to replace \eqref{subgrad:Lagrangian}-\eqref{subgrad:aux} with a bound for a subgradient of $\DRE$, by the expense of requiring \emph{more smoothness} for $f$, as in Lemma~\ref{subdiff-FBE} and Corollary~\ref{subdiff-DRE}. This corresponds to a slightly more restrictive alternative that does not require the use of the Augmented Lagrangian, and fits the setting in \cite{atenas2023unified, attouch2013convergence}. We start by first studying the FBE, and then extend the results to the DRE.

\begin{proposition}
	Suppose that $f: \R^d \to \R$ is a twice continuously differentiable $L_f$-smooth function with  Lipschitz continuous Hessian, and $g: \R^d \to \R \cup \{+\infty\}$ is a proper lsc prox-bounded function with threshold $\gamma_g >0$. Consider a bounded sequence $\{(z^k, x^k, y^k)\}$ generated by \eqref{DR:scheme} for some $\gamma \in (0, \min\{L_f^{-1},\gamma_g\})$. Then, there exists $K>0$, such that for all $k$,
	\begin{equation*}
		w^k := \gamma^{-1} \nabla \prox_{\gamma f}(z^k) \big( I - \gamma \nabla^2 f(x^k)\big) \big( x^k - y^k \big) \in \partial \DRE(z^k),
	\end{equation*} and $\|w^k\| \le K \|z^k - z^{k+1}\|$.
	
\end{proposition}

\begin{proof}
	From \eqref{DR:scheme} and Corollary~\ref{subdiff-DRE}, it follows that $y^k \in R_{\gamma}(z^k)$, and $w^k \in \partial \DRE(z^k)$. Furthermore, the boundedness assumption and \cite[Lemma 1]{atenas2024avoids} imply that $\{\nabla \prox_{\gamma f}(z^k) \big( I - \gamma \nabla^2 f(x^k)\big)\}$ is bounded. Therefore, the bound for $\{w^k\}$ can be thus deduced from the third step in \eqref{DR:scheme}.
\end{proof}

Although this result would allow following \cite{atenas2023unified, attouch2013convergence} to obtain subsequential convergence, in the next section we do not assume that $f$ is twice continuously differentiable, because $L_f$-smoothness is enough, as shown in Theorem~\ref{DR:convergence}.

\subsection{Rate of convergence of nonconvex Douglas-Rachford splitting}

The analysis for global convergence and local rates of convergence requires additional regularity assumptions for problem \eqref{problem:primal-DRS}. This
section is an extension of \cite{themelis2020douglas},
by applying the machinery of \cite{atenas2023unified} to \eqref{DR:scheme} through
the envelope
$\DRE$ and the augmented Lagrangian $\mathcal{L}_{\gamma^{-1}}.$ Recall that in this section, we do not assume that $f$ is twice continuously differentiable.

We say a function $\varphi: \R^n \to \R\cup \{+\infty\}$ satisfies a local error bound, if for any $\bar{\varphi} \ge \inf \varphi > -\infty$, there exist constants $\varepsilon , \ell >0,$ such that whenever $\varphi(y) \le \bar{\varphi}$,
\begin{equation} \label{EB-f}  \dist\bigl(y, (\partial
	\varphi)^{-1}(0)\bigr) \le \ell \dist\bigl(0, \partial \varphi (y) \cap B(0,
	\varepsilon)\bigr).  \end{equation}

Another assumption we need for the analysis of rates of convergence is the following. We say a function $\varphi: \R^n \to \R\cup \{+\infty\}$ satisfies the proper
separation of isocost surfaces property if there exists $\delta >0$, such that 
\begin{equation} \label{PSIS}  \forall x,y \in (\partial
	\varphi)^{-1}(0), \: \| x- y \| \le \delta \implies \varphi(x) =
	\varphi(y).  \end{equation}

For locally Lipschitz functions, the above  subdifferential-based error bound  is implied by metric subregularity of the subdifferential $\partial \varphi$ at $\bar{x} \in \argmin \varphi$ for $0$. See \cite{drusvyatskiy2015quadratic,drusvyatskiy2013tilt} for relationships between metric subregularity and quadratic growth in nonconvex settings, and \cite{artacho2013metric} in the convex case. In addition, the error bound above together with proper separation of isocost surfaces imply the so called Kurdyka-{\L}ojasiewic inequality with exponent $1/2$ \cite[Theorem 4.1]{li2018calculus} .

The next result relates the sequences generated by \eqref{DR:scheme} with the  error bound condition in \eqref{EB-f}, resulting in an estimate crucial to obtain local rate of convergence.

\begin{proposition} \label{prop:EB}  Suppose $f: \R^d \to \R $ is a $L_f$-smooth function, $g: \R^d \to \R \cup \{+\infty\}$ is a proper lsc prox-bounded function, and $\varphi = f + g$ has a nonempty set of minimizers.  In addition, assume the
	local error bound \eqref{EB-f} holds. Take any $\lambda \in (0,2)$, and $ \gamma \in (0, \frac{2-\lambda}{2L_f})$. Then, for any bounded sequence $\{(x^k, y^k, z^k)\}$
	generated by \eqref{DR:scheme}, there exist $\varepsilon>0$, $ \ell >0$, $\bar{\varphi}>0$, and a sequence $\{d^k\}$ given by \begin{equation*}
		d^k = \gamma^{-1}(x^k - y^k) - (\nabla
		f(x^k) -  \nabla f(y^k)),
	\end{equation*} such
	that $d^k \to 0$, and for all sufficiently large $k$, $d^k \in \partial \varphi (y^k) \cap B(0,\varepsilon)$,
	$\varphi(y^k) \le \bar{\varphi}$, and  \begin{equation} \label{EB:k} \dist\bigl(y^k, (\partial \varphi)^{-1}(0)\bigr) \le
		\ell \| d^k\|.  \end{equation}
	
\end{proposition}

\begin{proof}
	
	Since $\{\DRE(z^k)\}$ monotonically converges to $\varphi^{\star}$, then
	for any $\epsilon>0$, and for any sufficiently large $k,$  $\mathcal{L}_{\gamma^{-1}}(\x^k) =
	\DRE(z^k) \le \varphi^{\star} + \epsilon$, and thus \begin{equation*} 
		f(x^k) +
		g(y^k) + \gamma^{-1}\langle x^k-z^k, x^k-y^k\rangle +
		\dfrac{1}{2\gamma}\|x^k-y^k\|^2 \le \varphi^{\star} + \epsilon.  \end{equation*} Combining this inequality with Lemma~\ref{descent-lemma} and \eqref{u-OC}, we obtain \begin{equation*}
		\varphi(y^k) \le \varphi^{\star} + \epsilon + \dfrac{1}{2}\left(L_f - \dfrac{1}{\gamma}\right)\|x^k-y^k\|^2.
	\end{equation*} Since $\gamma < L_f^{-1}$, the right-most term is negative, and thus $\varphi(y^k) < \overline{\varphi}$ for $ \overline{\varphi}= \varphi^{\star} + \epsilon$.

Furthermore, from the optimality conditions of \eqref{DRE:moreau} (as shown in the proof of \cite[Theorem 4.3]{themelis2020douglas}), it follows
that $d^k \in \hat{\partial} \varphi
(y^k).$ Since $\nabla f$ is $L_f$-Lipschitz continuous,
then \begin{equation} \label{subgrad:phi} \| d^k\| \le
	\gamma^{-1}(\|x^k-y^k\| + \gamma L_f \| x^k - y^k\|) = \gamma^{-1}(1+\gamma
	L_f)\|x^k - y^k\|.  \end{equation} In this manner,  from
Theorem~\ref{DR:convergence}(ii), $d^k \to 0.$ Then, for $\varepsilon >0$ given in \eqref{EB-f}, and   all
sufficiently large $k$, $d^k \in \partial \varphi(y^k) \cap B(0,
\varepsilon),$ and $\varphi(y^k) \le \bar{\varphi}.$ This allows us to apply
\eqref{EB-f} to obtain \eqref{EB:k}.  \end{proof}

Before giving the main result of this section, first we need some
technical estimates deduced from Proposition~\ref{prop:EB}. As a consequence of the error bound, it is possible to bound the difference of $\DRE$ at two consecutive iterates using the primal-dual sequence $\{\xi^k\}$ (cf. Proposition~\ref{DR:descent}).

\begin{lemma} \label{lemma:proj}

Suppose the conditions of Proposition~\ref{prop:EB} hold. For any $p^k_y \in
\mbox{proj}_{(\partial \varphi)^{-1}(0)}(y^k),$ define \[p^k = (p_y^k, p_y^k,
\gamma^{-1}(p_y^k - z^k)). \] Then, there exists $\overline{C}>0$, such that for
all $k$,  \begin{equation} \label{pk-xk} \| p^k - \x^k\|^2 \le
	\overline{C}({\DRE(z^k) - \DRE(z^{k+1})}).  \end{equation}

\end{lemma}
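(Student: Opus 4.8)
The plan is to bound $\|p^k - \sigma^k\|^2$ componentwise and then convert everything into the quantity $\|u^k - v^k\|^2$, which in turn is controlled by the envelope decrease via Theorem~\ref{DR:descent}. Writing out the three blocks of $p^k - \sigma^k = (p_v^k - u^k,\ p_v^k - v^k,\ \gamma^{-1}(p_v^k - s^k) - \gamma^{-1}(u^k - s^k))$, the first and third components are both controlled by $\|p_v^k - u^k\|$ (the third equals $\gamma^{-1}(p_v^k - u^k)$), so
\[
\|p^k - \sigma^k\|^2 = (1 + \gamma^{-2})\|p_v^k - u^k\|^2 + \|p_v^k - v^k\|^2 .
\]
For the second component, $\|p_v^k - v^k\| = d\bigl(v^k,(\partial_L\varphi)^{-1}(0)\bigr)$ by the choice of $p_v^k$ as a projection, and Proposition~\ref{prop:EB} together with \eqref{subgrad:phi} gives $\|p_v^k - v^k\| \le \ell\|\tilde g^k\| \le \ell\gamma^{-1}(1+\gamma L)\|u^k - v^k\|$. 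For the first component, use the triangle inequality $\|p_v^k - u^k\| \le \|p_v^k - v^k\| + \|v^k - u^k\|$, which is then again bounded by a constant times $\|u^k - v^k\|$. Hence $\|p^k - \sigma^k\|^2 \le C_1 \|u^k - v^k\|^2$ for an explicit constant $C_1$ depending only on $\gamma, L, \ell$.

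The final step is to invoke Theorem~\ref{DR:descent}, specifically the estimate \eqref{DRE-decrease} in the form $\DRE(s^k) - \DRE(s^{k+1}) \ge c\|u^k - u^{k+1}\|^2$ together with the relation $\|u^k - v^k\| = \lambda\gamma \cdot (\text{something})$... — more carefully, from the update $s^{k+1} = s^k + \lambda(v^k - u^k)$ we get $\|s^k - s^{k+1}\| = \lambda\|u^k - v^k\|$, and \eqref{DRE-decrease} bounds $\DRE(s^k) - \DRE(s^{k+1})$ below by $\frac{c}{(1+\gamma L)^2}\|s^k - s^{k+1}\|^2 = \frac{c\lambda^2}{(1+\gamma L)^2}\|u^k - v^k\|^2$. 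Combining, $\|p^k - \sigma^k\|^2 \le C_1\|u^k - v^k\|^2 \le C_1 \cdot \frac{(1+\gamma L)^2}{c\lambda^2}\bigl(\DRE(s^k) - \DRE(s^{k+1})\bigr)$, which is \eqref{pk-xk} with $\overline{C} = C_1(1+\gamma L)^2/(c\lambda^2)$.

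The only genuinely delicate point is making sure Proposition~\ref{prop:EB} is applicable uniformly in $k$: its conclusion holds only for sufficiently large $k$ (once $\tilde g^k$ has entered $B(0,\varepsilon)$ and $\varphi(v^k) \le \bar\varphi$), whereas the lemma claims \eqref{pk-xk} "for all $k$". I expect this to be handled by the standard device of enlarging $\overline{C}$ to absorb the finitely many initial indices — for $k$ below the threshold, $\|p^k - \sigma^k\|^2$ is a fixed finite number and $\DRE(s^k) - \DRE(s^{k+1})$ is strictly positive (or the iteration has already converged), so a large enough constant covers them. The rest is routine triangle-inequality bookkeeping; the one thing to keep an eye on is that every appearance of $d\bigl(v^k,(\partial_L\varphi)^{-1}(0)\bigr)$ gets replaced using \eqref{EB:k} and \eqref{subgrad:phi} in the right order, so that the final bound is genuinely in terms of $\|u^k - v^k\|$ and nothing circular creeps in.
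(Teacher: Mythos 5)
Your proposal is correct and follows essentially the same route as the paper: the same three-block decomposition of $\|p^k-\x^k\|^2$, the same use of \eqref{EB:k} and \eqref{subgrad:phi} to control $\|p_v^k-v^k\|$, the triangle inequality for $\|p_v^k-u^k\|$, and the conversion to the envelope decrease via \eqref{DRE-decrease} and $\|s^k-s^{k+1}\|=\lambda\|u^k-v^k\|$; routing every term through $\|u^k-v^k\|^2$ rather than mixing $\|\tilde g^k\|^2$ and $\|s^k-s^{k+1}\|^2$ is only a bookkeeping difference. Your remark about the mismatch between ``for all $k$'' in the statement and ``for sufficiently large $k$'' in Proposition~\ref{prop:EB} identifies a point the paper's own proof silently glosses over, and your fix (enlarge $\overline{C}$ for the finitely many initial indices, noting that $\DRE(s^k)=\DRE(s^{k+1})$ forces $u^k=v^k=p_v^k$ so the bound is trivial there) is sound.
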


\begin{proof}

From the definition of $p^k$ and $\x^k$, we have \begin{equation*} \begin{aligned} \| p^k - \x^k\|^2  &=  \| p^k_y - x^k\|^2 + \|
		p^k_y - y^k\|^2 + \|\gamma^{-1}(p_y^k - z^k) - \gamma^{-1}(x^k - z^k) \|^2& \\
		&= (1+ \gamma^{-2})  \| p^k_y - x^k\|^2 + \| p^k_y - y^k\|^2. & \end{aligned}
\end{equation*} From \eqref{EB:k}, it follows that $ \| p^k_y - y^k\| \le \ell
\| d^k\|.$ As for $\| p^k_y - x^k\|$, it holds that \begin{equation*}
	\begin{aligned} \| p^k_y - x^k\|^2  & \le  (\| p^k_y - y^k\| + \| y^k - x^k\|)^2
		& \\ &\le  2\| p^k_y - y^k\|^2 + 2\| y^k - x^k\|^2 & \\ & \le  2\ell^2 \|
		d^k\|^2 + 2\| y^k - x^k\|^2 & \\ & =  2\ell^2 \| d^k\|^2 +
		2\lambda^{-2}\| z^k -z^{k+1}\|^2 & \\ \end{aligned} \end{equation*} where for the first inequality we apply the triangle inequality, for the second
inequality we use the estimate $(a+b)^2 \le 2(a^2 + b^2)$, and for the last equality we use the update rule for $\{z^k\}$ in \eqref{DR:scheme}.
Therefore,   \begin{equation*} \| p^k - \x^k\|^2 \le 2(1+ \gamma^{-2})  (\ell^2 \|
	d^k\|^2 + \lambda^{-2}\| z^k - z^{k+1}\|^2) + \ell^2 \| d^k\|^2.
\end{equation*}

Now, we bound the terms in the right-hand side of the above estimate.  First, note that the descent condition
\eqref{DRE-decrease} implies \begin{equation} \label{s:aux} \|z^k -z^{k+1}\|^2
	\le \dfrac{(1+ \gamma L_f)^2}{c} ({\DRE(z^k) - \DRE(z^{k+1})}).
\end{equation} Secondly, \eqref{subgrad:phi} together with \eqref{DR:scheme} and
\eqref{s:aux} imply \begin{equation} \label{bound-dk}\begin{aligned} \|d^k\|^2 & =  \gamma^{-2}(1+\gamma
		L_f)^2\| x^k - y^k\|^2 & \\ &=  (\lambda\gamma)^{-2}(1+\gamma L_f)^2\| z^k -
		z^{k+1}\|^2 & \\ & \le  (\lambda\gamma)^{-2}(1+\gamma L_f)^2\dfrac{(1+ \gamma
			L_f)^2}{c} ({\DRE(z^k) - \DRE(z^{k+1})}) & \\ \end{aligned}
\end{equation} Hence,  \eqref{pk-xk}  follows for \begin{equation*} \overline{C} =
	\frac{(1+\gamma
		L_f)^2}{c\lambda^{2}}\left(\frac{\bigl(2(1+\gamma^{-2})+1\bigr)(1+\gamma L_f)^2\ell^2}{
		\gamma^2} +2(1+\gamma^{-2})\right).  \end{equation*}
		\end{proof}
		
		In order to relate an
		error bound for $\varphi$ with the descent properties of Theorem~\ref{DR:descent}, we need to assume that $g$ is $\rho$-weakly convex. In this manner, both $f$ and $g$ are weakly convex. In view of \eqref{prox-subgrad},   subgradients of weakly convex functions can be characterized as proximal subgradients \cite[Definition 8.35]{rockafellar2009variational} in the whole space. This particular form for subgradients is key for the next technical result,  showing that the augmented Lagrangian  satisfies the (proximal) subgradient inequality for weakly convex functions throughout the path defined by $\{\xi^k\}$.
		
		\begin{lemma} \label{Lagrangian-wc}
Suppose $f: \R^d \to \R $ is a $L_f$-smooth function, $g: \R^d \to \R \cup \{+\infty\}$ is a proper lsc $\rho$-weakly convex function, and $\varphi = f + g$ has a nonempty set of minimizers. Then the subgradient defined in \eqref{subgrad:Lagrangian} satisfies the following estimate for all $k$: \begin{equation*}
	\mathcal{L}_{\gamma^{-1}}(\x^k) + \langle s^k, p^k - \x^k \rangle \le \mathcal{L}_{\gamma^{-1}}(p^k) + \frac{\max\{L_f,\rho,\gamma\}}{2}\|p^k-\x^k\|^2.
\end{equation*}
\end{lemma}

\begin{proof}
From the definition of $s^k$ in \eqref{subgrad:Lagrangian}, it holds for all $k$, \begin{equation*}
	\begin{array}{ll}
		&\mathcal{L}_{\gamma^{-1}}(\x^k) + \langle d^k, p^k - \x^k \rangle  \\
		=& f(x^k) + g(y^k) + \gamma^{-1}\langle x^k-z^k, x^k-y^k \rangle + \frac{1}{2\gamma}\|x^k-y^k\|^2 \\
		& + \gamma^{-1} \langle x^k - y^k, p_y^k - x^k \rangle + \langle x^k - y^k, \gamma^{-1}(p_y^k - z^k) - \gamma^{-1}(x^k - z^k) \rangle\\
		=& f(x^k) + g(y^k) + \gamma^{-1}\langle x^k-z^k, x^k-y^k \rangle + \frac{1}{2\gamma}\|x^k-y^k\|^2  + 2\gamma^{-1} \langle x^k - y^k, p_y^k - x^k \rangle .
	\end{array}
\end{equation*} We will reduce the inner products and the squared term, by first working on them separately. Define $d_2^k = -\nabla f(x^k) - \gamma^{-1}(y^k-x^k)$, which in view of the optimality conditions of $y^k = \prox_{\gamma g}(x^k - \gamma \nabla f (x^k))$, satisfies $d_2^k \in \partial g(y^k)$. In this way, from \eqref{u-OC},\begin{equation*}
	\begin{array}{ll}
		&\gamma^{-1}\langle x^k-z^k, x^k-y^k \rangle  \\
		= &  \langle -\nabla f(x^k), x^k-y^k \rangle \\
		= &  \langle d_2^k, x^k-y^k \rangle + \gamma^{-1}\langle y^k - x^k, x^k-y^k \rangle \\
		= &  \langle d_2^k, p_y^k-y^k \rangle + \langle d_2^k, x^k-p_y^k \rangle - \gamma^{-1}\| x^k-y^k\|^2. 
	\end{array}
\end{equation*} Furthermore, \begin{equation*}
	\begin{array}{ll}
		&  2\gamma^{-1} \langle x^k - y^k, p_y^k - x^k \rangle  \\
		= & 2\gamma^{-1} \langle x^k - y^k, p_y^k - x^k \rangle  - \langle \nabla f(x^k) + \gamma^{-1}(y^k-x^k), x^k-p_y^k \rangle - \langle d_2^k, x^k-p_y^k \rangle \\
		= & 2\gamma^{-1} \langle x^k - y^k, p_y^k - x^k \rangle  + \gamma^{-1}\langle 2x^k - z^k - y^k, x^k-p_y^k \rangle - \langle d_2^k, x^k-p_y^k \rangle \\
		= &  \langle \gamma^{-1}(z^k - y^k), p_y^k - x^k \rangle  - \langle d_2^k, x^k-p_y^k \rangle \\
		= &  \langle \nabla f(x^k), p_y^k - x^k \rangle + \gamma^{ -1} \langle x^k - y^k, p_y^k - x^k \rangle - \langle d_2^k, x^k-p_y^k \rangle ,
	\end{array}
\end{equation*} where the third identity and the last line follow from \eqref{u-OC}. Therefore, gathering  the aforementioned terms, it holds that \begin{equation*}
	\begin{array}{ll}
		&  \gamma^{-1}\langle x^k-z^k, x^k-y^k \rangle + \frac{1}{2\gamma}\|x^k-y^k\|^2  + 2\gamma^{-1} \langle x^k - y^k, p_y^k - x^k \rangle  \\
		= & \langle d_2^k, p_y^k-y^k \rangle - \gamma^{-1}\| x^k-y^k\|^2 + \langle \nabla f(x^k), p_y^k - x^k \rangle + \gamma^{ -1} \langle x^k - y^k, p_y^k - x^k \rangle \\ & \quad + \frac{1}{2\gamma}\|x^k-y^k\|^2 \\
		= & \langle d_2^k, p_y^k-y^k \rangle  + \langle \nabla f(x^k), p_y^k - x^k \rangle - (\frac{1}{2\gamma}\|x^k-y^k\|^2 - \gamma^{ -1} \langle x^k - y^k, p_y^k - x^k \rangle)   \\
		= & \langle d_2^k, p_y^k-y^k \rangle  + \langle \nabla f(x^k), p_y^k - x^k \rangle - (\frac{1}{2\gamma}\|x^k-y^k -(p_y^k-x^k)\|^2 - \frac{1}{2\gamma}\|p_y^k-x^k\|^2)   \\
		\le & \langle d_2^k, p_y^k-y^k \rangle  + \langle \nabla f(x^k), p_x^k - x^k \rangle + \frac{1}{2\gamma}\|p_y^k-x^k\|^2  \\
	\end{array}
\end{equation*} Hence, Lemma~\ref{descent-lemma} and \eqref{prox-subgrad} yield, \begin{equation*}
	\begin{array}{ll}
		&\mathcal{L}_{\gamma^{-1}}(\x^k) + \langle s^k, p^k - \x^k \rangle  \\
		\le & f(x^k) +\langle \nabla f(x^k), p_y^k - x^k \rangle  + g(y^k) + \langle d_2^k, p_y^k-y^k \rangle + \frac{1}{2\gamma}\|p_y^k-x^k\|^2 \\
		\le & f(p_y^k) + \frac{L}{2}\|p_y^k - x^k\|^2 +  g(p_y^k) + \frac{\rho}{2}\|p_y^k - y^k\|^2+ \frac{\gamma}{2}\|\gamma^{-1}(p_y^k-x^k)\|^2 \\
		= & f(p_y^k) + \frac{L_f}{2}\|p_y^k - x^k\|^2 +  g(p_y^k) + \frac{\rho}{2}\|p_y^k - y^k\|^2+ \frac{\gamma}{2}\|\gamma^{-1}(p_y^k-z^k)-\gamma^{-1}(x^k-z^k)\|^2 \\
		= & \mathcal{L}_{\gamma^{-1}}(p^k) + \frac{L_f}{2}\|p_y^k - x^k\|^2 + \frac{\rho}{2}\|p_y^k - y^k\|^2+ \frac{\gamma}{2}\|\gamma^{-1}(p_y^k-z^k)-\gamma^{-1}(x^k-z^k)\|^2, 
	\end{array}
\end{equation*} from where we can conclude.
\end{proof}

The next theorem is the main result of this section, establishing local linear rates of convergence of DR splitting for weakly convex problems. 

\begin{theorem}[Rate of convergence of nonconvex DR splitting] \label{DR:rate}

Suppose $f: \R^d \to \R $ is a $L_f$-smooth function, $g: \R^d \to \R \cup \{+\infty\}$ is a proper lsc $\rho$-weakly convex function, and $\varphi = f + g$ has a nonempty set of minimizers.  In addition, assume the
local error bound \eqref{EB-f} holds, as well as the proper separation of isocost surfaces property \eqref{PSIS}. Then, for $\lambda \in (0,2)$,  $ \gamma \in (0, \frac{2-\lambda}{2L_f})$, and any bounded sequence $\{(x^k, y^k, z^k)\}$
generated by \eqref{DR:scheme}, 

\begin{itemize} \item[(i)] The sequence $\{\DRE(z^k)\}$ $Q-$linearly converges to
	a critical value $\varphi^{\star}$ of $\varphi,$ and the sequence $\{f(x^k)+g(y^k)\}$ $R-$linearly converges to the same value $\varphi^{\star}.$
	
	\item[(ii)] The sequences $\{x^k\}$ and $\{y^k\}$ $R-$linearly converge to a
	critical point $x^{\star}$ of $\varphi$, and $\{z^k\}$ $R-$linearly
	converges to a point $z^{\star}$, such that $x^{\star} = \prox_{\gamma
		f}(x^{\star})$.  \end{itemize}

\end{theorem}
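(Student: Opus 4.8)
The plan is to run the ``sufficient-decrease + error bound + separation of isocost surfaces'' argument of \cite{atenas2023unified} on the merit sequence $\{\DRE(s^k)\}$, using the augmented Lagrangian representation $\DRE(s^k)=\mathcal{L}_{\gamma^{-1}}(\x^k)$ from \eqref{DR:Lagrangian} and the primal-dual points $\x^k=(u^k,v^k,\gamma^{-1}(u^k-s^k))$. First I would fix notation: let $\delta^k:=\DRE(s^k)-\varphi^{\star}\ge 0$, which is non-increasing and tends to $0$ by Theorem~\ref{DR:convergence}(i). The goal of part (i) is to show $\delta^{k+1}\le q\,\delta^k$ for some $q\in(0,1)$. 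Combining the sufficient decrease \eqref{DRE-decrease} (which gives $\DRE(s^k)-\DRE(s^{k+1})\ge c\|u^k-u^{k+1}\|^2$ and the analogous bound in $\|s^k-s^{k+1}\|^2$) with the key estimate \eqref{pk-xk} of Lemma~\ref{lemma:proj}, namely $\|p^k-\x^k\|^2\le \overline{C}(\DRE(s^k)-\DRE(s^{k+1}))$, the remaining task is to bound $\delta^k$ from above by a constant times $\|p^k-\x^k\|^2$ (or by $\DRE(s^k)-\DRE(s^{k+1})$ directly). For this one uses that $p^k_v$ is a critical point of $\varphi$, that $\varphi$ is $\rho$-weakly convex (Assumption~\ref{assumption:w-cvx}, so $\partial_F\varphi=\partial_L\varphi=\partial_C\varphi$ and the proximal subgradient inequality \eqref{prox-subgrad} holds globally), and that by \eqref{PSIS} all critical points within distance $\delta$ share the common critical value $\varphi^{\star}$; hence $\varphi(p^k_v)=\varphi^{\star}$ for $k$ large. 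Then $\DRE(s^k)\le\varphi(u^k)$ by Proposition~\ref{DRE:sandwich}(i), and $\varphi(u^k)-\varphi(p^k_v)$ is controlled by the weak-convexity inequality applied at $p^k_v$ with $0\in\partial\varphi(p^k_v)$, giving $\varphi(u^k)\le\varphi^{\star}+\tfrac{\rho}{2}\|u^k-p^k_v\|^2\le\varphi^{\star}+\tfrac{\rho}{2}\|\x^k-p^k\|^2$. Chaining these yields $\delta^k\le\tfrac{\rho}{2}\overline{C}(\DRE(s^k)-\DRE(s^{k+1}))=\tfrac{\rho}{2}\overline{C}(\delta^k-\delta^{k+1})$, which rearranges to $\delta^{k+1}\le q\,\delta^k$ with $q=1-\tfrac{2}{\rho\overline{C}}\in(0,1)$ (shrinking $\gamma$ if necessary to ensure $q>0$, or simply noting $\overline{C}$ can be enlarged). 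This proves $Q$-linear convergence of $\{\DRE(s^k)\}$.

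For the $R$-linear convergence of $\{\varphi_1(u^k)+\varphi_2(v^k)\}$ in part (i), I would use the identity from the proof of Theorem~\ref{DR:convergence}, $\varphi_1(u^k)+\varphi_2(v^k)=\DRE(s^k)-\bigl(\gamma^{-1}\langle u^k-s^k,u^k-v^k\rangle+\tfrac{1}{2\gamma}\|u^k-v^k\|^2\bigr)$, and bound the bracketed term by a constant times $\|u^k-v^k\|=\lambda^{-1}\|s^k-s^{k+1}\|$, which by \eqref{s:aux} and the just-established $Q$-linear decay of $\delta^k$ is $O(q^{k/2})$; hence $|\varphi_1(u^k)+\varphi_2(v^k)-\varphi^{\star}|\le|\delta^k|+O(q^{k/2})$ decays $R$-linearly. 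For part (ii), the standard telescoping argument applies: from \eqref{DRE-decrease}, $\|s^k-s^{k+1}\|\le\sqrt{(1+\gamma L)^2\delta^k/c}\cdot\sqrt{1}$ — more precisely $\|s^k-s^{k+1}\|^2\le\tfrac{(1+\gamma L)^2}{c}(\delta^k-\delta^{k+1})\le\tfrac{(1+\gamma L)^2}{c}\delta^k$, so $\|s^k-s^{k+1}\|\le C' q^{k/2}$ and $\sum_{j\ge k}\|s^j-s^{j+1}\|\le \tfrac{C'}{1-\sqrt q}q^{k/2}$; thus $\{s^k\}$ is Cauchy and converges $R$-linearly to some $s^{\star}$. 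Since $u^k=\prox_{\gamma\varphi_1}(s^k)$ and $\prox_{\gamma\varphi_1}$ is Lipschitz (Proposition~\ref{DRE:properties}(i) / \cite[Proposition 2.3]{themelis2020douglas}), $\{u^k\}$ converges $R$-linearly to $u^{\star}=\prox_{\gamma\varphi_1}(s^{\star})$, and $v^k-u^k\to 0$ at the same $R$-linear rate gives $v^k\to u^{\star}$ $R$-linearly; that $u^{\star}$ is critical for $\varphi$ is Theorem~\ref{DR:convergence}(iii).

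The main obstacle is the upper bound $\delta^k\lesssim\|p^k-\x^k\|^2$ — i.e.\ turning the error bound \eqref{EB-f}, which controls \emph{distance to the critical set} by \emph{subgradient norm}, into control of the \emph{merit-function gap} $\DRE(s^k)-\varphi^{\star}$. This is exactly where Assumption~\ref{assumption:w-cvx} (weak convexity) is indispensable: it both makes the three subdifferentials coincide (so that $p^k_v\in(\partial_L\varphi)^{-1}(0)$ used in Lemma~\ref{lemma:proj} is genuinely a critical point in the proximal sense) and supplies the quadratic upper model $\varphi(u^k)\le\varphi(p^k_v)+\tfrac{\rho}{2}\|u^k-p^k_v\|^2$ via \eqref{prox-subgrad} with $g=0$; property \eqref{PSIS} is what lets us replace $\varphi(p^k_v)$ by the single value $\varphi^{\star}$ once the iterates are close to the limiting critical point. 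A secondary technical point is making sure $q\in(0,1)$ strictly — handled by noting $\overline{C}>0$ can always be taken large enough that $\tfrac{2}{\rho\overline{C}}<1$. I would present the argument by first stating the inequality $\delta^k\le\tfrac{\rho}{2}\overline{C}(\delta^k-\delta^{k+1})$ as the crux, then deriving (i) and (ii) as described.
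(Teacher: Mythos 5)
Your overall architecture matches the paper's: sufficient decrease \eqref{DRE-decrease} plus Lemma~\ref{lemma:proj} plus \eqref{PSIS} to get $V^{k+1}\le rV^k$ for $V^k=\DRE(s^k)-\varphi^{\star}$, then telescoping for the iterates and the Lagrangian identity \eqref{DR:Lagrangian} for the functional values. However, your crux inequality is derived incorrectly: you claim $\varphi(u^k)\le\varphi(p^k_v)+\tfrac{\rho}{2}\|u^k-p^k_v\|^2$ ``via \eqref{prox-subgrad} with $g=0$ at $p^k_v$,'' but \eqref{prox-subgrad} applied at $\overline{v}=p^k_v$ with $g=0$ reads $\varphi(u^k)+\tfrac{\rho}{2}\|u^k-p^k_v\|^2\ge\varphi(p^k_v)$, which is a \emph{lower} bound on $\varphi(u^k)$, not the upper bound you need. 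Weak convexity, like convexity, only supplies minorants at a reference point; an upper quadratic model at a critical point is simply false for, say, $\varphi(x)=|x|$ at $p^k_v=0$. So the chain $\delta^k\le\tfrac{\rho}{2}\|p^k-\x^k\|^2\le\tfrac{\rho}{2}\overline{C}(\delta^k-\delta^{k+1})$ does not follow, and this is precisely the step the whole rate rests on.

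The paper's fix is to apply \eqref{prox-subgrad} at the \emph{iterate} rather than at the critical point: using the explicitly computed subgradient $g^k\in\partial_F\mathcal{L}_{\gamma^{-1}}(\x^k)$ from \eqref{subgrad:Lagrangian}, weak convexity of the augmented Lagrangian at $\x^k$ evaluated at $p^k$ gives
\begin{equation*}
\DRE(s^k)-\varphi^{\star}=\mathcal{L}_{\gamma^{-1}}(\x^k)-\mathcal{L}_{\gamma^{-1}}(p^k)\le \|g^k\|\,\|p^k-\x^k\|+\tfrac{\rho}{2}\|p^k-\x^k\|^2,
\end{equation*}
which is the correct direction. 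The cross term $\|g^k\|\,\|p^k-\x^k\|$ --- which your version drops entirely because you set $g=0$ --- must be kept and is then controlled via \eqref{subgrad:aux} and \eqref{s:aux}, both being $O(\sqrt{\DRE(s^k)-\DRE(s^{k+1})})$, yielding \eqref{DL} with the constant $\hat{C}=\tilde{C}+\overline{C}\rho/2$. (Note also that the rearrangement $V^{k+1}\le\hat{C}(V^k-V^{k+1})$ gives $r=\hat{C}/(1+\hat{C})\in(0,1)$ automatically, so no shrinking of $\gamma$ or enlarging of constants is needed.) The rest of your argument --- the use of \eqref{PSIS} to pin $\varphi(p^k_v)=\varphi^{\star}$, the telescoping/Cauchy argument for $\{s^k\}$, Lipschitz continuity of $\prox_{\gamma\varphi_1}$ for $\{u^k\}$, and the bound on $|\varphi_1(u^k)+\varphi_2(v^k)-\varphi^{\star}|$ via the Lagrangian identity --- is sound and agrees with the paper once the central inequality is repaired.
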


\begin{proof}

First, from Proposition~\ref{prop:EB}, since $ d^k \to 0$, then $y^k - p^k_y
\to 0$, which in turn implies  $p^k_y- p^{k+1}_y \to 0$, in view of
Theorem~\ref{DR:convergence}(ii). Then, applying the proper separation of isocost
surfaces property \eqref{PSIS}, for all sufficiently large $k,$ $\varphi(p^k_y)
= \varphi(p^{k+1}_y),$ and thus $\varphi(p^k_y) = \varphi^{\star},$ for some
critical value $\varphi^{\star}$ of $\varphi$. From
Theorem~\ref{DR:convergence}(iii), up to a subsequence, $y^k \to x^{\star}$, for
a critical point $x^{\star}$ of $\varphi$. Therefore, $p_y^k \to x^{\star}$  and
$\varphi(p_y^k) \to \varphi(x^{\star})$, for the same subsequence. Hence
$\varphi^{\star} = \varphi(x^{\star})$.

Furthermore, from the definition of the augmented Lagrangian, we have
$\mathcal{L}_{\gamma^{-1}}(p^k) = \varphi(p_y^k)$. In view of
Lemma~\ref{Lagrangian-wc}, for all sufficiently large $k$,   \begin{equation*} \begin{array}{rcl} \DRE(z^k)-
		\varphi(p_y^k) & =&\mathcal{L}_{\gamma^{-1}}(\x^k)- \mathcal{L}_{\gamma^{-1}}(p^k)
		\\ & \le &-\langle d^k, p^k - \x^k \rangle + \dfrac{\bar{\rho}}{2}\| p^k - \x^k \|^2,
\end{array} \end{equation*} where $\bar{\rho} = \max\{L,\rho,\gamma\}$. Hence,  \begin{equation}
	\label{Lagr:estimate} \DRE(z^k) - \varphi^{\star}  \le \|s^k \| \| p^k - \x^k \|+
	\dfrac{\bar{\rho}}{2}\| p^k - \x^k \|^2  .  \end{equation}

Combine Proposition~\ref{DR:descent}  and \eqref{pk-xk}  
to obtain \begin{equation} \label{DL} \DRE(z^k) - \varphi^{\star}
	\le  \left(\tilde{C}+ \overline{C}\dfrac{\bar{\rho}}{2} \right)({\DRE(z^k)
		- \DRE(z^{k+1})}) \end{equation} for $\tilde{C} = \lambda^{-1}\sqrt{\gamma^{-2}+1}(1+\gamma L_f)
\sqrt{\frac{\overline{C}}{c}}$.   Set $\hat{C} := \tilde{C}+ \overline{C}\dfrac{\bar{\rho}}{2},$ $r =
\dfrac{\hat{C}}{1+\hat{C}} \in (0,1),$ and $\phi^k := \DRE(z^k) - \varphi^{\star}
$. Monotonicity of $\{\DRE(z^k)\}$ implies $\phi^{k+1} \le \phi^k$. Thus,
from \eqref{DL}, for all sufficiently large $k,$\begin{equation*} \phi^{k+1} \le
	\hat{C}(\phi^k - \phi^{k+1}) \iff \phi^{k+1} \le r \phi^k, \end{equation*} from which the first part of item (i) follows. 

For item (ii), suppose that above estimate holds for all $k \ge k_0.$ Then,
\begin{equation*} \phi^{k+1} \le (\phi^{k_0}r^{1-k_0})r^k, \end{equation*} or
equivalently, for $q = \phi^{k_0}r^{-k_0}$ and all $k \ge k_0 +1$ \begin{equation} \label{Vk:r-rate}
	\phi^k \le
	q r^k.  \end{equation} From the descent condition  of Proposition~\ref{DR:descent}, it follows \begin{equation}
	\label{descent:Vk} \|z^k - z^{k+1}\| \le \dfrac{1+\gamma L_f}{\sqrt{c}}\sqrt{\phi^k},
	\quad \mbox{ and } \quad \|x^k - x^{k+1}\| \le \dfrac{1}{\sqrt{c}} \sqrt{\phi^k}.
\end{equation} Therefore, from \cite[Lemma 4.1]{atenas2023unified}, 
there exists
$m>0$, $\alpha \in (0,1)$, $z^{\star} \in \R^d$ such that for all
sufficiently large $k$ \begin{equation*} \|z^k- z^{\star}\| \le m \alpha^k,
	\quad \|x^k -x^{\star}\| \le m \alpha^k.  \end{equation*} Note that  $\{x^k\}$ converges to the critical point $x^{\star}$, since $\{x^k\}$ and $\{y^k\}$ have the same limit points. Observe that since $f$ is $L_f$-smooth, then
$\prox_{\gamma f}$ is Lipschitz continuous \cite[Proposition
2.3(ii)]{themelis2020douglas}, therefore $x^k = \prox_{\gamma f}(z^k)
\to \prox_{\gamma f}(z^{\star})$, and $x^{\star}=\prox_{\gamma f}(z^{\star}).$

In addition, the rate of convergence of $\{y^k\}$ can be deduced using the
triangle inequality, the update rule for $\{z^k\}$ in \eqref{DR:scheme}, and
\eqref{descent:Vk} as follows:

\begin{equation*} \begin{aligned} \|y^k - y^{k+1}\|  & \le  \|y^k - x^{k}\| +
		\|x^k - x^{k+1}\| + \|x^{k+1} - y^{k+1}\|& \\ &=\lambda^{-1}\|z^k - z^{k+1}\| +
		\|x^k - x^{k+1}\| + \lambda^{-1}\|z^{k+1} - z^{k+2}\|& \\ & \le
		\lambda^{-1}\dfrac{1+\gamma L_f}{\sqrt{c}}\sqrt{\phi^k} +
		\dfrac{1}{\sqrt{c}}\sqrt{\phi^k} + \lambda^{-1}\dfrac{1+\gamma
			L_f}{\sqrt{c}}\sqrt{\phi^{k+1}}.  & \end{aligned} \end{equation*} Since $\{\phi^k\}$ is nonincreasing, then \begin{equation*} \|y^k - y^{k+1}\| \le
	\left( \dfrac{2\lambda^{-1}(1+\gamma L_f) +1}{\sqrt{c}}\right) \sqrt{\phi^k}.
\end{equation*} Then, from \cite[Lemma 4.1]{atenas2023unified} it follows that for all
sufficiently large $k$, \begin{equation*} \|y^k - y^{\star}\| \le \bar{m}
	\bar{\alpha}^k.  \end{equation*} for some $\bar{m}>0$ and $\bar{\alpha}\in
(0,1).$

Finally, to obtain the rate of convergence of $\{f(x^k)+ g(y^k)\}$, first note that since $\DRE(z^{k+1}) \ge \varphi^{\star}$, then from \eqref{DRE-decrease}, \eqref{Vk:r-rate}, and \eqref{DR:scheme}, it follows \begin{equation} \label{R-rate:aux}
	\lambda^2 \| x^k - y^k\|^2 = \|z^k - z^{k+1}\|^2 \le \left[ \frac{(1+\gamma L_f)^2 q}{c} \right] r^k.
\end{equation} Furthermore, in view of \eqref{DR:Lagrangian}, \begin{equation*}
	|f(x^k) + g(y^k) - \varphi^{\star}| \le \DRE(z^k)-\varphi^{\star} + \frac{1}{\gamma}\|x^k-z^k\| \|x^k - y^k\| + \frac{1}{2 \gamma }\|x^k - y^k\|^2.
\end{equation*} By assumption, $\{x^k\}$ and $\{z^k\}$ are bounded sequences, therefore there exists $M_2>0$, such that for all $k$, $\|x^k- z^k\| \le M_2.$ Substituting this estimate, \eqref{Vk:r-rate} and \eqref{R-rate:aux} in the above inequality, yields \begin{equation*}
	|f(x^k) + g(y^k) - \varphi^{\star}| \le qr^k + \left( \frac{M_2(1+\gamma L_f)}{\gamma \lambda} \sqrt{\frac{q}{c}}\right) \sqrt{r}^k + \left( \frac{(1+\gamma L_f)^2 q}{2 \gamma c \lambda^2} \right) r^k.
\end{equation*} Since $r\in (0,1)$, then $r\le \sqrt{r}$, and thus \begin{equation*}
	|f(x^k) + g(y^k) - \varphi^{\star}| \le \tilde{K} \sqrt{r}^k,
\end{equation*} where \begin{equation*}
	\tilde{K} = q +  \frac{M_2(1+\gamma L_f)}{\gamma \lambda} \sqrt{\frac{q}{c}} + \frac{(1+\gamma L_f)^2 q}{2 \gamma c \lambda^2}. 
\end{equation*} This proves the second part of item (i).

\end{proof}

\begin{remark}
    The authors in \cite[Theorem 3]{li2016douglas} established rates of convergence for the subgradient smallest norm in a different but related nonconvex setting, namely, for semialgebraic optimization. Observe that Theorem~\ref{DR:rate} goes a bit further and shows rates of convergence for function values and for iterates of DR. Similar results to Theorem~\ref{DR:rate} can be obtained from \cite[Theorem 4.4]{themelis2020douglas} under the assumption of the Kurdyka-{\L}ojasiewic inequality, using arguably standard arguments.
\end{remark}

In this section, we show that the convergence results of the DR splitting method in the convex case can be extended to the weakly convex setting under appropriate regularity assumptions, and by resorting to the DRE. In principle, as long as it is possible to define a Moreau-type envelope for a nonmonotone method, such as DR, the same line of reasoning can be applied, which resembles the one employed in \cite{atenas2023unified} for descent methods.


\section{Numerical experiments} \label{section:numerical}

In this section, we present the numerical performance of the DR splitting method applied to the regularized least squares problem with nonconvex regularizers. The role of regularization is to induce sparsity in the solution via approximation of the $\ell_0$ pseudonorm, and one way to achieve that is to use penalties that are sharp around the origin, such as the $\ell_1$ norm \cite{santosa1986linear}.

In our experiments, we analyze two nonconvex penalties. First, we consider the \emph{minimax convex penalty} (MCP) defined as follows:  for $x \in \R$, and $\sigma, \theta >0$,  $g(x) = \sigma |x| - \frac{1}{2\theta}x^2 $ if $|x| \le \theta\sigma$, and $g(x) = \frac{\theta\sigma^2}{2}$ otherwise. This function is $\theta^{-1}$-weakly convex \cite{bohm2021variable}, and its proximal operator, called the \emph{firm threshold} \cite{gao1997waveshrink}, is given by \begin{equation} \label{prox-MCP}
	\prox_{\gamma g}(x) = \left\{\begin{array}{cl}
		0 & \mbox{ if } |x| < \gamma\sigma \\
		\dfrac{x - \sigma \gamma \mbox{sign}(x)}{1-\frac{\gamma}{\theta}} & \mbox{ if } \gamma\sigma \le |x| \le \theta\sigma\\
		x & \mbox{ if }|x| > \theta\sigma.
	\end{array}\right.
\end{equation}  whenever $\theta > \gamma$. Secondly, we examine the \emph{smoothly clipped absolute deviation} (SCAD) penalty, defined for $x \in \R$ and $\sigma >0$ and $\theta >2$ as: $g(x) = \sigma |x|$ if $|x| < \sigma$, $g(x) = \frac{-x^2 + 2\theta\sigma|x| - \sigma^2}{2(\theta-1)}$ if $\sigma \le |x| \le \theta\sigma $, and $g(x) = \frac{(\theta+1)\sigma^2}{2}$ if $|x| > \theta\sigma $. This function is $(\theta-1)^{-1}$-weakly convex \cite{bohm2021variable}, and its proximal operator is given by \cite{fan2001variable} \begin{equation} \label{prox-SCAD}
	\prox_{\gamma g}(x) = \left\{\begin{array}{cl}
		\mbox{sign}(x)\max\{0, |x| - \sigma\} & \mbox{ if } |x| < \sigma (\gamma + 1 ) \\
		\dfrac{(\theta-1)x - \theta\sigma \gamma \mbox{sign}(x)}{\theta - \gamma - 1 } & \mbox{ if } \sigma (\gamma + 1 ) \le |x| \le \theta\sigma\\
		x & \mbox{ if }|x| > \theta\sigma.
	\end{array}\right.
\end{equation} The classical approach of using the $\ell_1$ norm as penalty function has the disadvantage of introducing bias, since the proximal operator of the $\ell_1$ norm, the \emph{soft thresholding} operator, does not approach the identity for arguments with large magnitude, while both \eqref{prox-MCP} and \eqref{prox-SCAD} both do. 

The battery of problems we address have the structure in \eqref{problem:primal-DRS}, with $f(x) = \frac{1}{2}\|Ax - b\|_2^2$, where $A$ is a convolution matrix of dimension $m \times n$, \begin{equation*}
	(m, n) \in \{(10,30), (30, 90), (50, 150)\},
\end{equation*} and $b$ is a $m$-dimensional vector generated as follows: given a sparse randomly generated target point $\bar x \in \R^n$, define $b = A \bar{x} + e$, where $e$ is a random noise vector. Regarding the regularization component, $g$ takes the form of the MCP and SCAD penalties. Clearly, $f$ is a continuously differentiable function with Lipschitz continuous gradient with Lipschitz constant $L_f = \|A^{\top}A\|_2$, where $\|\cdot\|_2$  denotes the spectral norm. We use  $\lambda \in \{0.5, 1.0, 1.5\} $ for the over-relaxation parameter, and $\gamma = \alpha \cdot \frac{2-\lambda}{2 L_f}$ for the proximal parameter with $\alpha \in \{0.3, 0.5, 0.9\}$. For both nonconvex regularizers, we set $\sigma \in \{1, 10^{-1}, 10^{-2}\}$,   and for penalty-dependent parameters,  we use $\theta = 1.5 \gamma$ for MCP, and $\theta = 1.5 (\gamma + 1)$ for SCAD. We also set $5000$ as the maximum number of iterations, and  $ \|x^k - y^k \| \le 10^{-6}$ as stopping test, based on Proposition~\ref{DR:descent} and Corollary~\ref{subdiff-DRE}: a small value of $\|x^k - y^k\|$ yields an \emph{approximate} critical point. 

\begin{figure}[b!]
	\centering
	\begin{subfigure}{.475\textwidth}
		\centering
		\includegraphics[width=.95\linewidth]{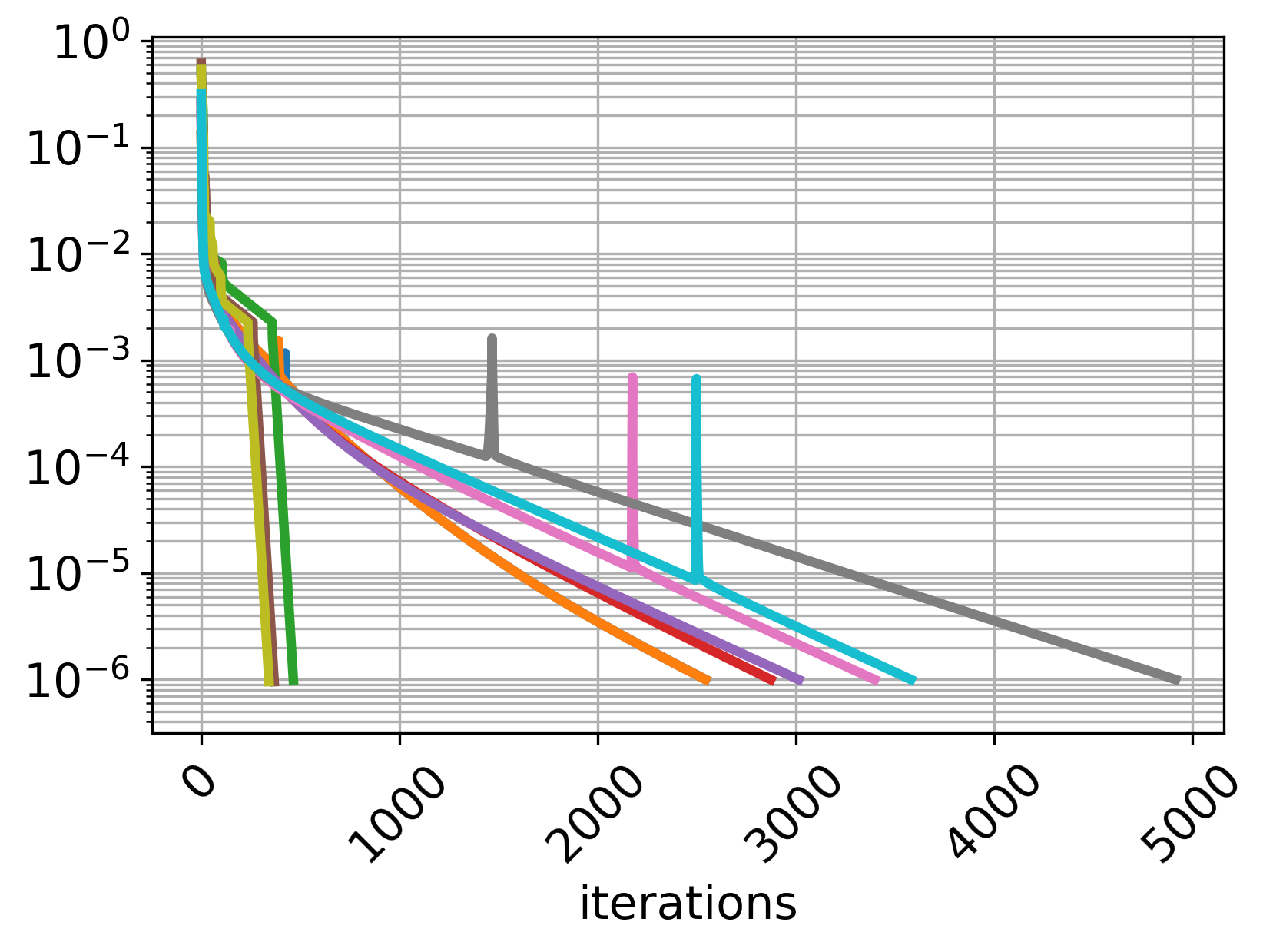}
		\caption{$\{\|x^k-y^k\|\}$}
		\label{fig:sub1a}
	\end{subfigure}%
	\begin{subfigure}{.475\textwidth}
		\centering
		\includegraphics[width=.95\linewidth]{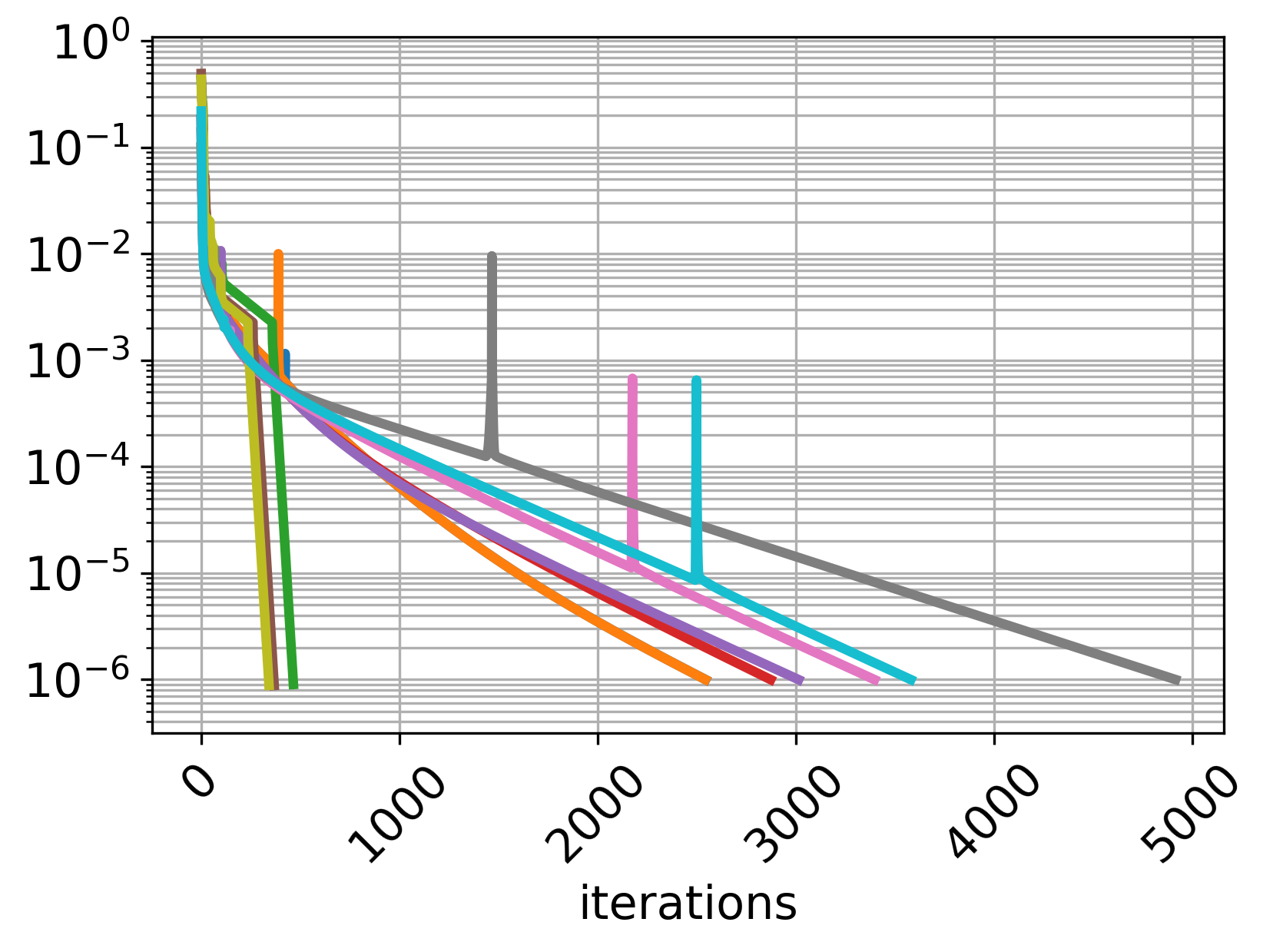}
		\caption{$\{\|x^{k+1} - x^k\|\}$}
		\label{fig:sub1b}
	\end{subfigure}
	\caption{Progression (log scale) along iterations of DR splitting using MCP penalty, $n=30$, $m=10$, $10$ different random starting points, $\lambda = 1$,  $\gamma = 0.9\cdot \frac{1}{2 L_f}$, $\theta = 1.5\gamma$, and $\sigma = 0.01$.} 
	\label{fig:rates-convergence-1}
\end{figure}

\begin{figure}[b!]
	\centering
	\begin{subfigure}{.5\textwidth}
		\centering
		\includegraphics[width=.95\linewidth]{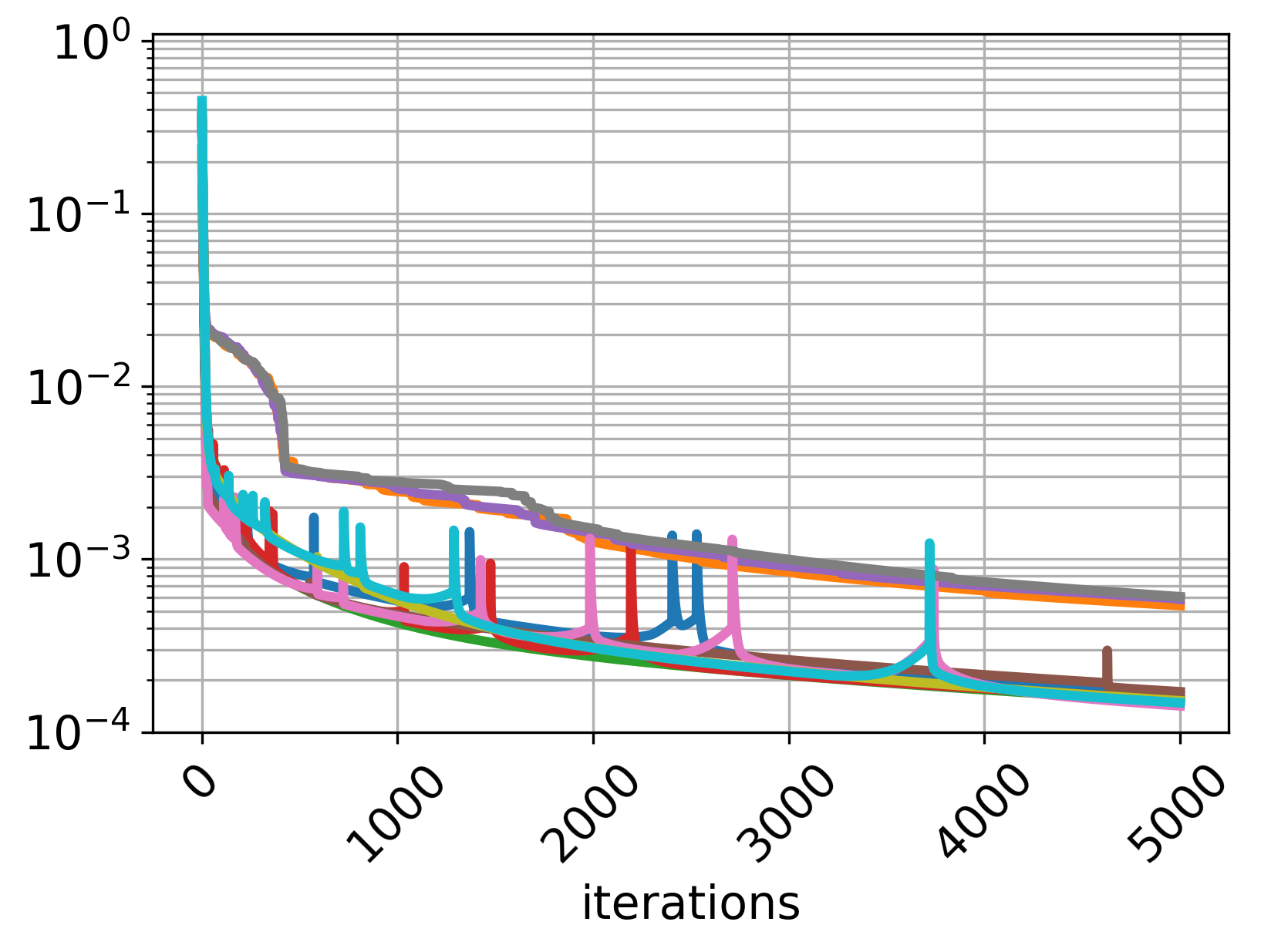}
		\caption{$\{\|z^{k+1}-z^k\|\}$.}
		\label{fig:sub2a}
	\end{subfigure}%
	\begin{subfigure}{.5\textwidth}
		\centering
		\includegraphics[width=.95\linewidth]{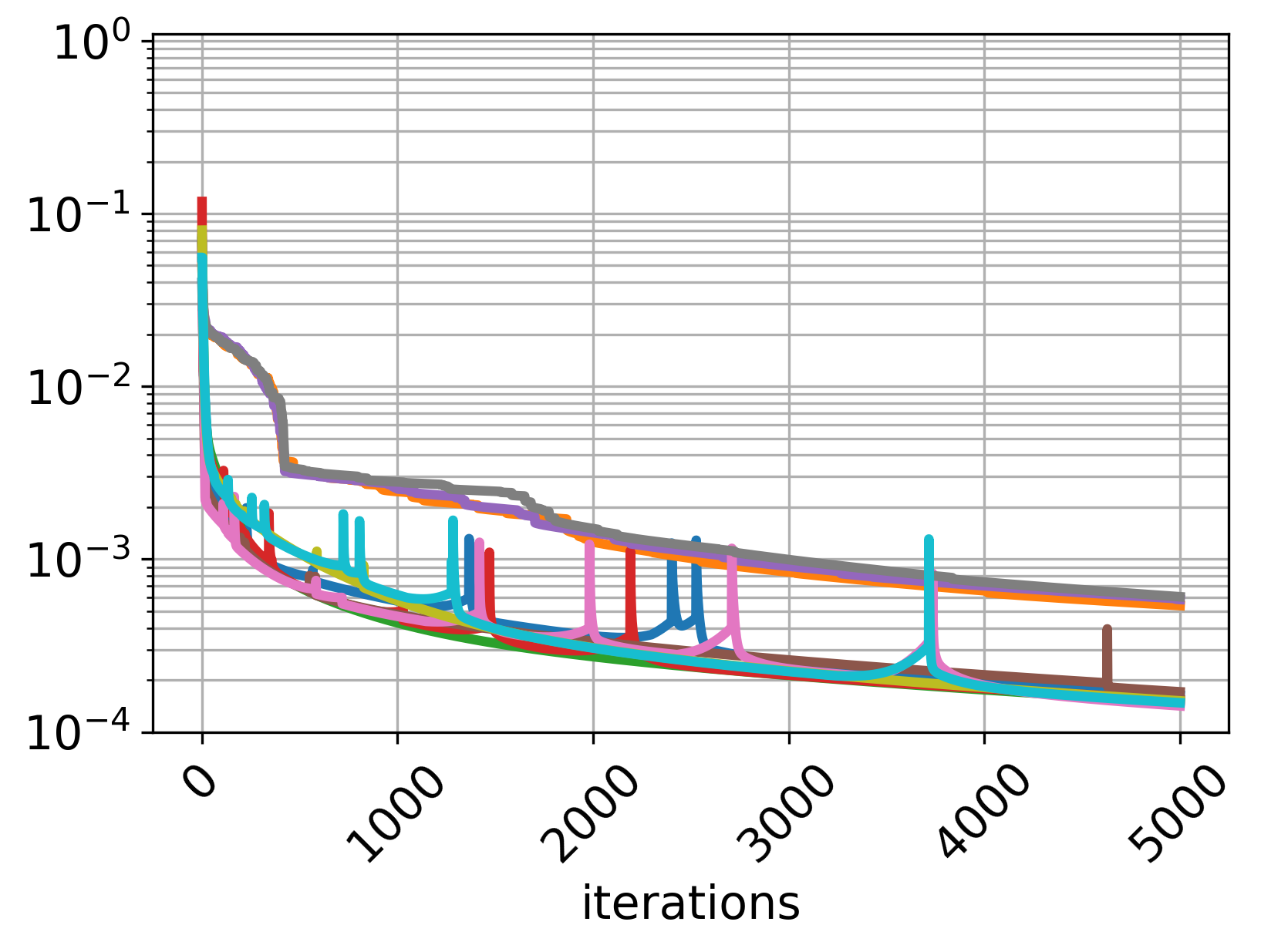}
		\caption{$\{\|y^{k+1} - y^k\|\}$}
		\label{fig:sub2b}
	\end{subfigure}
	\caption{Progression (log scale) along iterations of DR splitting using the SCAD penalty, $n=150$, $m=50$,  $10$ different random starting points, $\lambda = 0.5$,  $\gamma = 0.5\cdot \frac{1}{2 L_f}$, $\theta = 1.5\gamma$, and $\sigma = 0.1$.} 
	\label{fig:rates-convergence-2}
\end{figure}

\begin{figure}[h!]
	\centering
	\begin{subfigure}{.5\textwidth}
		\centering
		\includegraphics[width=.95\linewidth]{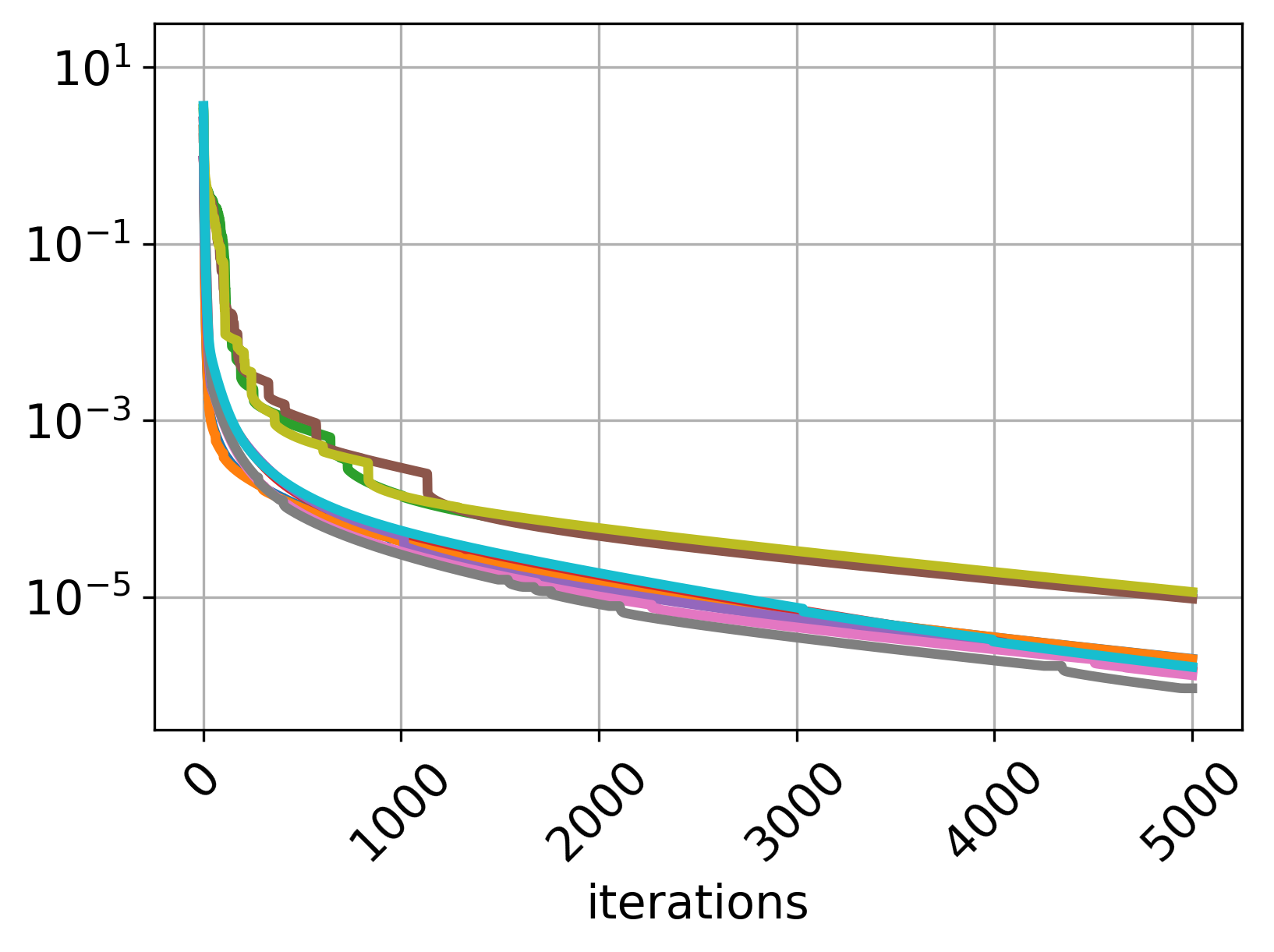}
		\caption{$\DRE(z^{k+1})-\DRE(z^k)$}
		\label{fig:sub3a}
	\end{subfigure}%
	\begin{subfigure}{.5\textwidth}
		\centering
		\includegraphics[width=.95\linewidth]{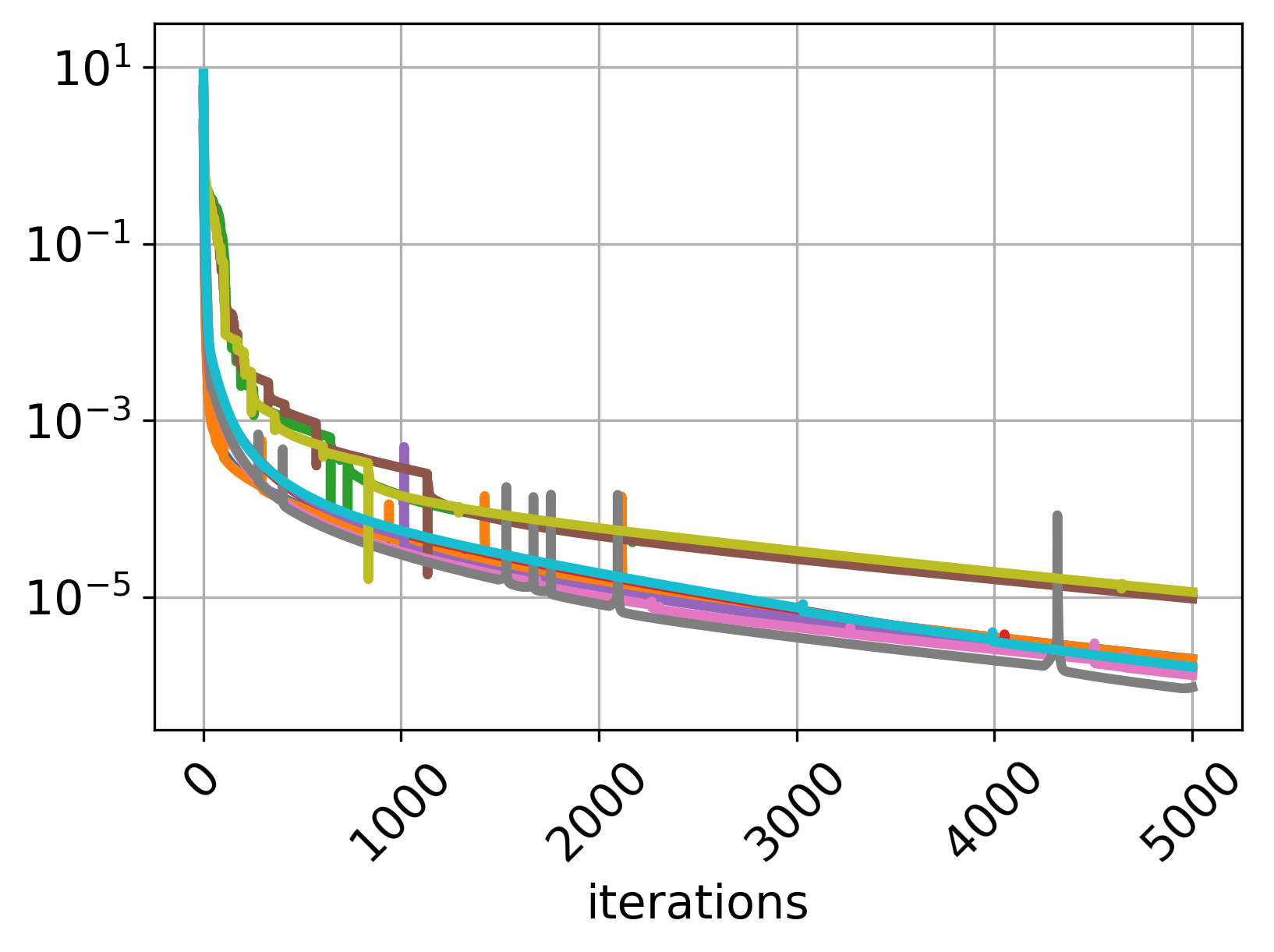}
		\caption{$|f(x^{k+1})+g(x^{k+1})-(f(x^k)+g(x^k))|$}
		\label{fig:sub3b}
	\end{subfigure}
	\caption{Progression (log scale) along iterations of DR splitting for different random starting points, MCP $n=90$, $m=30$, $\lambda = 0.5$,  $\gamma = 0.5\cdot \frac{1}{2 L_f}$, $\theta = 1.5\gamma$, $\sigma = 0.1$.} 
	\label{fig:rates-convergence-3}
\end{figure}

\begin{figure}[h!]
	\centering
	\begin{subfigure}{.5\textwidth}
		\centering
		\includegraphics[width=.95\linewidth]{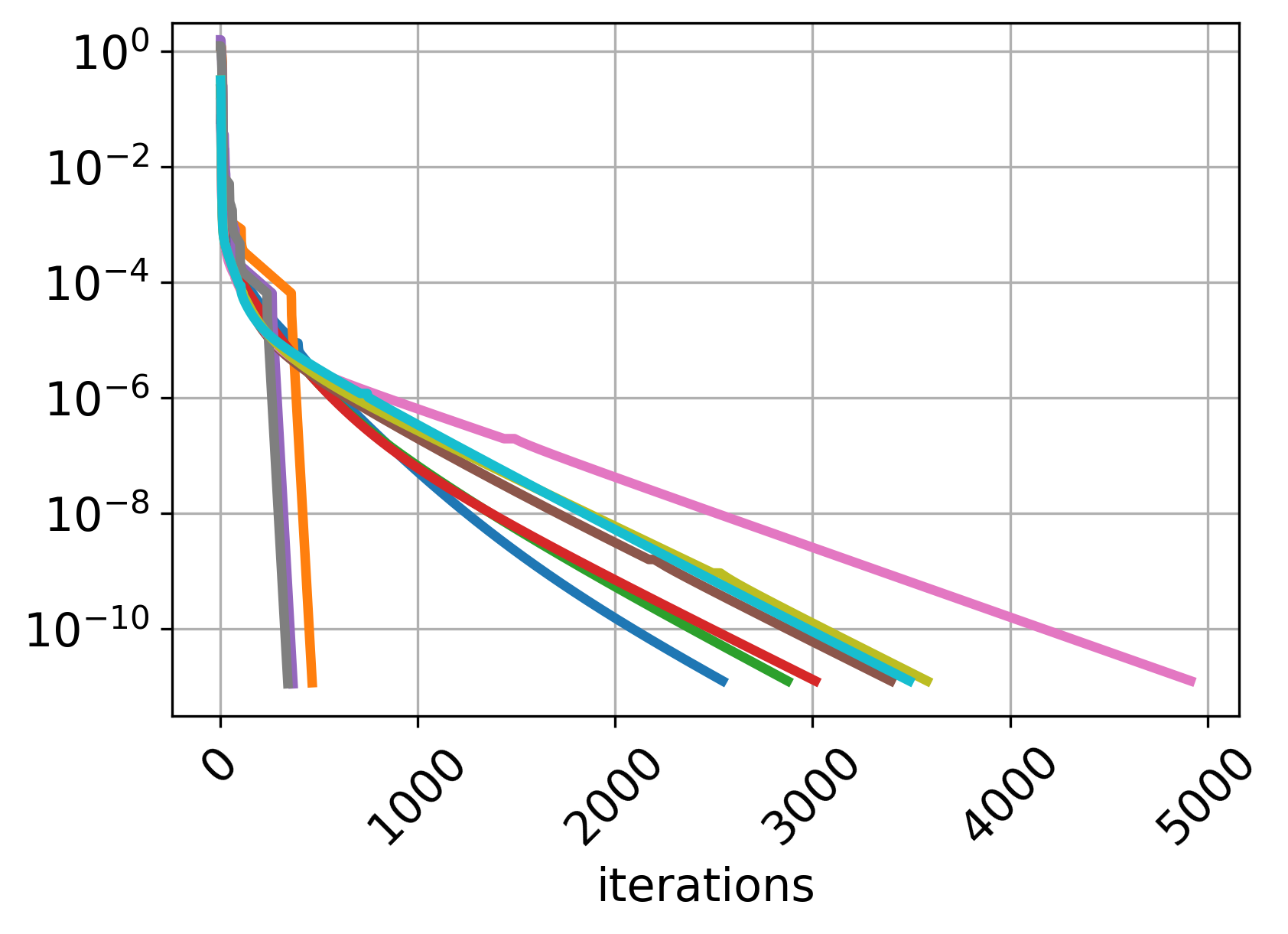}
		\caption{$\{\DRE(z^{k+1})-\DRE(z^k)\}$}
		\label{fig:sub4a}
	\end{subfigure}%
	\begin{subfigure}{.5\textwidth}
		\centering
		\includegraphics[width=.95\linewidth]{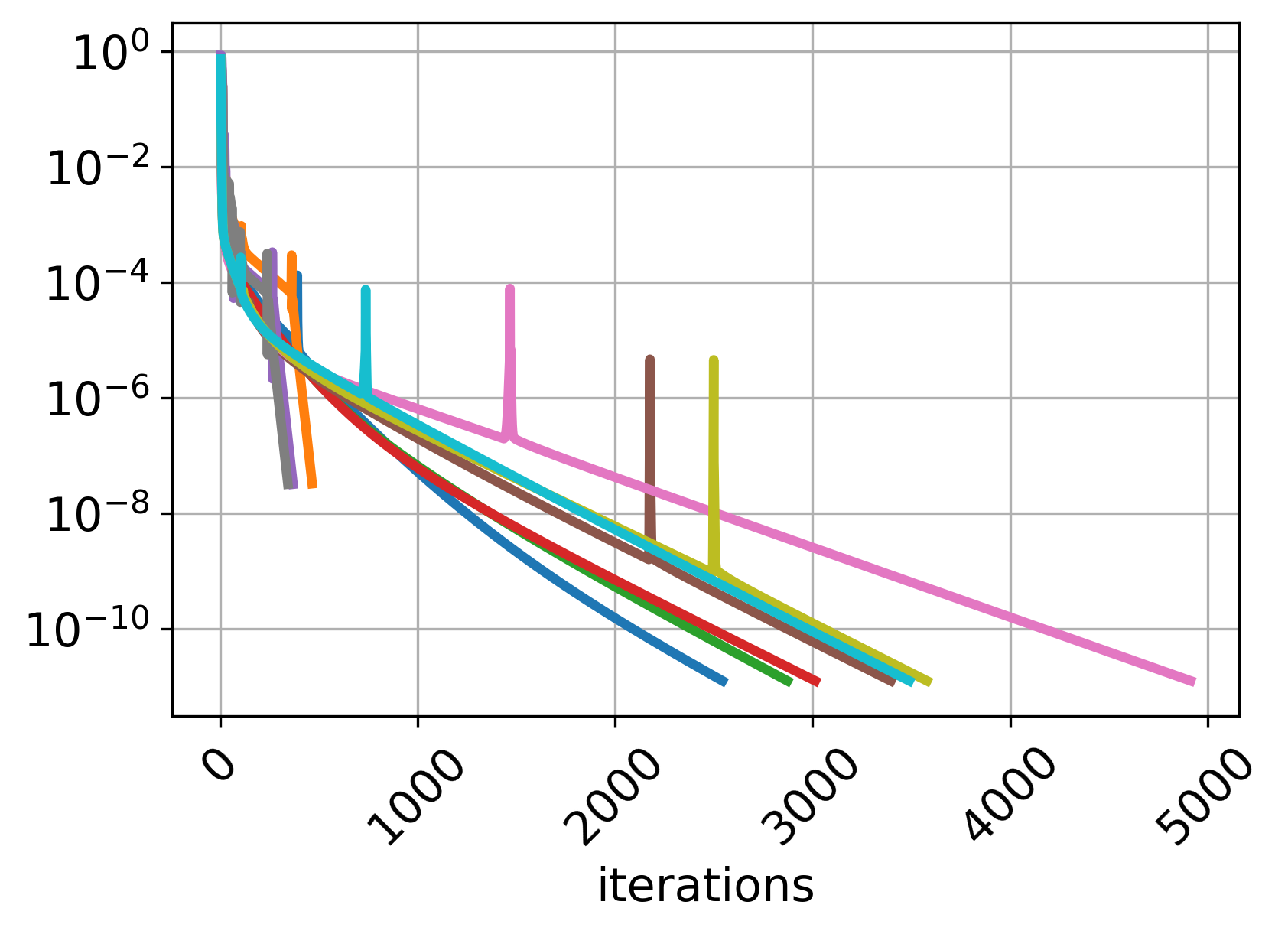}
		\caption{$\{|f(x^{k+1})+g(x^{k+1})-(f(x^k)+g(x^k))|\}$}
		\label{fig:sub4b}
	\end{subfigure}
	\caption{Progression (log scale) along iterations of DR splitting for $10$ different random starting points, SCAD $n=30$, $m=10$, $\lambda = 1$,  $\gamma = 0.9\cdot \frac{1}{2 L_f}$, $\theta = 1.5\gamma$, $\sigma = 0.01$.} 
	\label{fig:rates-convergence-4}
\end{figure}

Figures~\ref{fig:rates-convergence-1}--\ref{fig:rates-convergence-4} show the behavior of different measures of progress alongside iterations, for $10$ randomly generated initial points. Clearly, they show the anticipated linear rate of convergence of the method for the residual $\|x^k - y^k\|$, the distance of two consecutive iterates, and also the progress of the DRE and the value $\{f(x^k) + g(y^k)\}$ (cf. Theorem~\ref{DR:rate}).
 
 Finally, we report the performance profile for the stopping test mentioned above for MCP, SCAD and the $\ell_1$ norm as penalties. Just to illustrate, Table~\ref{tab:1} shows that for a particular configuration using the MCP penalty, in all the instances run the measure $\|x^k - y^k\|$ achieved the order of magnitude of $10^{-4}$, while only roughly $12\%$ of them surpassed the set threshold $10^{-6}$.

\begin{table}[h!]
	\centering
	\begin{tabular}{c||c|c|c|c|c|c|c}
		accuracy & $<10^{-1}$ &  $<10^{-2}$ & $<10^{-3}$  & $<10^{-4}$  & $<10^{-5}$  & $<10^{-6}$ & $<10^{-7}$  \\ \hline
		fraction & 1 & 1 & 1 & 1 &  0.511 & 0.237 & 0.126 \\ 
	\end{tabular}
	\caption{Fraction of problems solved by DR splitting using MCP regularizer for different levels of accuracy,  $\lambda \in \{0.5, 1.0, 1.5\}$,  $\gamma \in \{0.3, 0.5, 0.9\} \cdot \left( \frac{2-\lambda}{2 L_f} \right)$, $\sigma = 1$ and $10$ different initial points.}
	\label{tab:1}
\end{table}

Figure~\ref{fig:perf-prof-diff} is the performance profile comparing the two weakly convex regularizers and the convex $\ell_1$ regularizer in terms of the order of magnitude of $\{\|x^k - y^k\|\}$ by the final iteration performed. We observe that the SCAD penalty achieved an accuracy of $10^{-7}$ for slightly over $40\%$ of the instances, while MCP does it for $30\%$ to $40\%$ of the instances. Furthermore, both nonconvex penalties performed better than the $\ell_1$ norm for accuracy equal or better than $10^{-6}$.  For all instances, both MCP and SCAD showed accuracy of at least $10^{-4}$, while for the convex regularizer, around $10\%$ of the runs could not achieve this order of magnitude.  These results suggest that SCAD is more promising than the other two penalties, at the possible expense of having an arguably more intricate proximal operator to compute.

\begin{figure}[h!]
	\centering
	\includegraphics[width=0.5\textwidth]{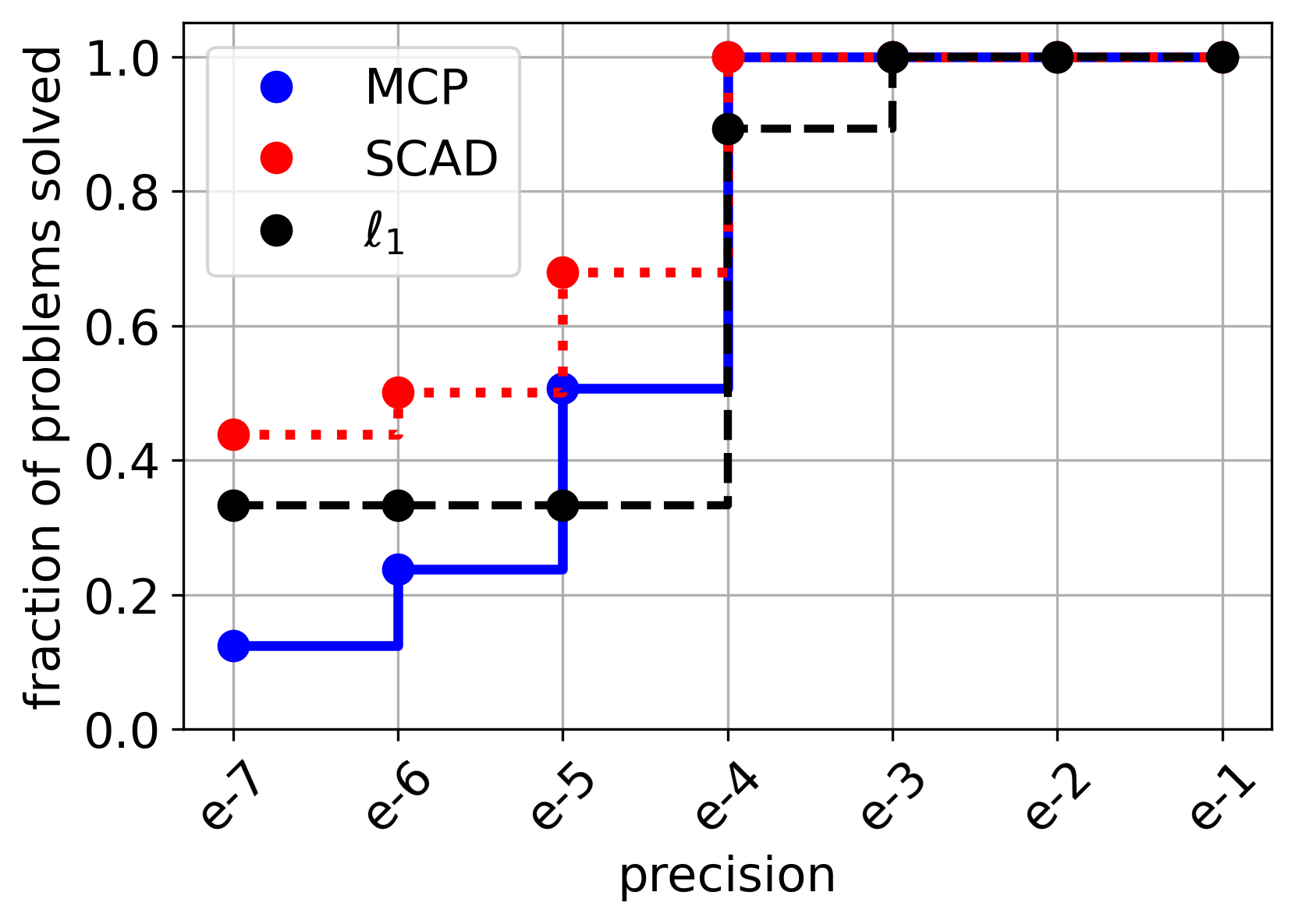}
	\caption{Performance profile of the Douglas-Rachford splitting method to solve the regularized least squares problem using convex and nonconvex regularizers.}
	\label{fig:perf-prof-diff}
\end{figure}


\section{Concluding remarks} \label{section-final}

In this article, we proved that the Moreau-type envelopes defined for the FB and the DR splitting methods approximate the objective function via epigraphs. This fact has a number of consequences, two of them discussed here. We then proceeded to analyze the behavior of the DR method itself via envelopes, showing that it is possible to retrieve linear convergence of the algorithm by resorting to techniques usually employed for descent methods. The author is currently working to extend these results to other types of splitting methods for weakly convex optimization.

\bibliographystyle{abbrv}
\bibliography{biblio}

\begin{thebibliography}{10}

\bibitem{artacho2013metric}
F.~J. Arag{\'o}n-Artacho and M.~H. Geoffroy.
\newblock Metric subregularity of the convex subdifferential in banach spaces.
\newblock {\em Journal of Nonlinear and Convex Analysis}, 15(1):35--47, 2014.

\bibitem{aragon2023coderivative}
F.~J. Arag{\'o}n-Artacho, B.~S. Mordukhovich, and P.~P{\'e}rez-Aros.
\newblock Coderivative-based semi-newton method in nonsmooth difference programming.
\newblock {\em Mathematical Programming}, pages 1--48, 2024.

\bibitem{atenas2024avoids}
F.~Atenas.
\newblock Weakly convex {D}ouglas-{R}achford splitting avoids strict saddle points.
\newblock {\em Optimization Online}, 2024.

\bibitem{atenas2023unified}
F.~Atenas, C.~Sagastiz{\'a}bal, P.~J. Silva, and M.~Solodov.
\newblock A unified analysis of descent sequences in weakly convex optimization, including convergence rates for bundle methods.
\newblock {\em SIAM Journal on Optimization}, 33(1):89--115, 2023.

\bibitem{attouch2013convergence}
H.~Attouch, J.~Bolte, and B.~F. Svaiter.
\newblock Convergence of descent methods for semi-algebraic and tame problems: proximal algorithms, forward--backward splitting, and regularized gauss--seidel methods.
\newblock {\em Mathematical Programming}, 137(1):91--129, 2013.

\bibitem{bauschke2011convex}
H.~Bauschke and P.~Combettes.
\newblock {\em Convex Analysis and Monotone Operator Theory in Hilbert Spaces}.
\newblock CMS Books in Mathematics. Springer New York, NY, 2011.

\bibitem{bohm2021variable}
A.~B{\"o}hm and S.~J. Wright.
\newblock Variable smoothing for weakly convex composite functions.
\newblock {\em Journal of Optimization Theory and Applications}, 188:628--649, 2021.

\bibitem{bolte2007clarke}
J.~Bolte, A.~Daniilidis, A.~Lewis, and M.~Shiota.
\newblock Clarke subgradients of stratifiable functions.
\newblock {\em SIAM Journal on Optimization}, 18(2):556--572, 2007.

\bibitem{bolte2017error}
J.~Bolte, T.~P. Nguyen, J.~Peypouquet, and B.~W. Suter.
\newblock From error bounds to the complexity of first-order descent methods for convex functions.
\newblock {\em Mathematical Programming}, 165:471--507, 2017.

\bibitem{boyd2011distributed}
S.~Boyd, N.~Parikh, E.~Chu, B.~Peleato, J.~Eckstein, et~al.
\newblock Distributed optimization and statistical learning via the alternating direction method of multipliers.
\newblock {\em Foundations and Trends in Machine learning}, 3(1):1--122, 2011.

\bibitem{burke2013epi}
J.~V. Burke and T.~Hoheisel.
\newblock Epi-convergent smoothing with applications to convex composite functions.
\newblock {\em SIAM Journal on Optimization}, 23(3):1457--1479, 2013.

\bibitem{cai2010split}
J.-F. Cai, S.~Osher, and Z.~Shen.
\newblock Split {B}regman methods and frame based image restoration.
\newblock {\em Multiscale Modeling \& Simulation}, 8(2):337--369, 2010.

\bibitem{combettes2007douglas}
P.~L. Combettes and J.-C. Pesquet.
\newblock A {D}ouglas--{R}achford splitting approach to nonsmooth convex variational signal recovery.
\newblock {\em IEEE Journal of Selected Topics in Signal Processing}, 1(4):564--574, 2007.

\bibitem{combettes2011proximal}
P.~L. Combettes and J.-C. Pesquet.
\newblock {\em Proximal Splitting Methods in Signal Processing}, pages 185--212.
\newblock Springer New York, New York, NY, 2011.

\bibitem{dao2019adaptive}
M.~N. Dao and H.~M. Phan.
\newblock Adaptive {D}ouglas--{R}achford splitting algorithm for the sum of two operators.
\newblock {\em SIAM Journal on Optimization}, 29(4):2697--2724, 2019.

\bibitem{davis2015convergence}
D.~Davis.
\newblock Convergence rate analysis of the forward-{D}ouglas-{R}achford splitting scheme.
\newblock {\em SIAM Journal on Optimization}, 25(3):1760--1786, 2015.

\bibitem{davis2019stochastic}
D.~Davis and D.~Drusvyatskiy.
\newblock Stochastic model-based minimization of weakly convex functions.
\newblock {\em SIAM Journal on Optimization}, 29(1):207--239, 2019.

\bibitem{davis2016convergence}
D.~Davis and W.~Yin.
\newblock {\em Convergence Rate Analysis of Several Splitting Schemes}, pages 115--163.
\newblock Springer International Publishing, Cham, 2016.

\bibitem{drusvyatskiy2015quadratic}
D.~Drusvyatskiy and A.~D. Ioffe.
\newblock Quadratic growth and critical point stability of semi-algebraic functions.
\newblock {\em Mathematical Programming}, 153(2):635--653, 2015.

\bibitem{drusvyatskiy2013tilt}
D.~Drusvyatskiy and A.~S. Lewis.
\newblock Tilt stability, uniform quadratic growth, and strong metric regularity of the subdifferential.
\newblock {\em SIAM Journal on Optimization}, 23(1):256--267, 2013.

\bibitem{eckstein1989splitting}
J.~Eckstein.
\newblock {\em Splitting methods for monotone operators with applications to parallel optimization}.
\newblock PhD thesis, Massachusetts Institute of Technology, 1989.

\bibitem{eckstein2017simplified}
J.~Eckstein.
\newblock A simplified form of block-iterative operator splitting and an asynchronous algorithm resembling the multi-block alternating direction method of multipliers.
\newblock {\em Journal of Optimization Theory and Applications}, 173(1):155--182, 2017.

\bibitem{eckstein1992douglas}
J.~Eckstein and D.~P. Bertsekas.
\newblock On the {D}ouglas--{R}achford splitting method and the proximal point algorithm for maximal monotone operators.
\newblock {\em Mathematical Programming}, 55:293--318, 1992.

\bibitem{fan2001variable}
J.~Fan and R.~Li.
\newblock Variable selection via nonconcave penalized likelihood and its oracle properties.
\newblock {\em Journal of the American Statistical Association}, 96(456):1348--1360, 2001.

\bibitem{frankel2015splitting}
P.~Frankel, G.~Garrigos, and J.~Peypouquet.
\newblock Splitting methods with variable metric for {K}urdyka--{{\L}}ojasiewicz functions and general convergence rates.
\newblock {\em Journal of Optimization Theory and Applications}, 165:874--900, 2015.

\bibitem{gao1997waveshrink}
H.-Y. Gao and A.~G. Bruce.
\newblock Waveshrink with firm shrinkage.
\newblock {\em Statistica Sinica}, 7(4):855--874, 1997.

\bibitem{glowinski2017splitting}
R.~Glowinski, S.~Osher, and W.~Yin.
\newblock {\em Splitting Methods in Communication, Imaging, Science, and Engineering}.
\newblock Scientific Computation. Springer International Publishing, Cham, 2017.

\bibitem{hoheiselproximal}
T.~Hoheisel, M.~Laborde, and A.~Oberman.
\newblock On proximal point-type algorithms for weakly convex functions and their connection to the backward euler method.
\newblock {\em Optimization Online}.

\bibitem{kurdyka1998gradients}
K.~Kurdyka.
\newblock On gradients of functions definable in o-minimal structures.
\newblock In {\em Annales de l'Institut Fourier}, volume 48(3), pages 769--783, 1998.

\bibitem{li2015global}
G.~Li and T.~K. Pong.
\newblock Global convergence of splitting methods for nonconvex composite optimization.
\newblock {\em SIAM Journal on Optimization}, 25(4):2434--2460, 2015.

\bibitem{li2016douglas}
G.~Li and T.~K. Pong.
\newblock Douglas--{R}achford splitting for nonconvex optimization with application to nonconvex feasibility problems.
\newblock {\em Mathematical Programming}, 159:371--401, 2016.

\bibitem{li2018calculus}
G.~Li and T.~K. Pong.
\newblock Calculus of the exponent of {K}urdyka--{{\L}}ojasiewicz inequality and its applications to linear convergence of first-order methods.
\newblock {\em Foundations of Computational Mathematics}, 18(5):1199--1232, 2018.

\bibitem{lions1979splitting}
P.-L. Lions and B.~Mercier.
\newblock Splitting algorithms for the sum of two nonlinear operators.
\newblock {\em SIAM Journal on Numerical Analysis}, 16(6):964--979, 1979.

\bibitem{luo1993error}
Z.-Q. Luo and P.~Tseng.
\newblock Error bounds and convergence analysis of feasible descent methods: a general approach.
\newblock {\em Annals of Operations Research}, 46(1):157--178, 1993.

\bibitem{malitsky2022resolvent}
Y.~Malitsky and M.~K. Tam.
\newblock Resolvent splitting for sums of monotone operators with minimal lifting.
\newblock {\em Mathematical Programming}, pages 1--32, 2022.

\bibitem{martinet-1970}
B.~Martinet.
\newblock Regularisation d'inequations variationelles par approximations successives.
\newblock {\em Revue Francaise d'informatique et de recherche operationelle}, 4:154--159, 1970.

\bibitem{patrinos2013proximal}
P.~Patrinos and A.~Bemporad.
\newblock Proximal newton methods for convex composite optimization.
\newblock In {\em 52nd IEEE Conference on Decision and Control}, pages 2358--2363. IEEE, 2013.

\bibitem{patrinos2014douglas}
P.~Patrinos, L.~Stella, and A.~Bemporad.
\newblock {D}ouglas-{R}achford splitting: Complexity estimates and accelerated variants.
\newblock In {\em 53rd IEEE Conference on Decision and Control}, pages 4234--4239. IEEE, 2014.

\bibitem{patrinos2014forward}
P.~Patrinos, L.~Stella, and A.~Bemporad.
\newblock Forward-backward truncated {N}ewton methods for convex composite optimization.
\newblock {\em arXiv preprint arXiv:1402.6655}, 2014.

\bibitem{robinson1999linear}
S.~M. Robinson.
\newblock Linear convergence of epsilon-subgradient descent methods for a class of convex functions.
\newblock {\em Mathematical Programming}, 86:41--50, 1999.

\bibitem{rockafellar1976monotone}
R.~T. Rockafellar.
\newblock Monotone operators and the proximal point algorithm.
\newblock {\em SIAM Journal on Control and Optimization}, 14(5):877--898, 1976.

\bibitem{rockafellar2009variational}
R.~T. Rockafellar and R.~J.-B. Wets.
\newblock {\em Variational analysis}.
\newblock Grundlehren der mathematischen Wissenschaften. Springer, Berlin Heidelberg, 2009.

\bibitem{royset2020set}
J.~O. Royset.
\newblock Set-convergence and its application: a tutorial.
\newblock {\em Set-Valued and Variational Analysis}, 28(4):707--732, 2020.

\bibitem{santosa1986linear}
F.~Santosa and W.~W. Symes.
\newblock Linear inversion of band-limited reflection seismograms.
\newblock {\em SIAM Journal on Scientific and Statistical Computing}, 7(4):1307--1330, 1986.

\bibitem{spingarn1983partial}
J.~E. Spingarn.
\newblock Partial inverse of a monotone operator.
\newblock {\em Applied Mathematics and Optimization}, 10(1):247--265, 1983.

\bibitem{themelis2020douglas}
A.~Themelis and P.~Patrinos.
\newblock Douglas--{R}achford splitting and {ADMM} for nonconvex optimization: Tight convergence results.
\newblock {\em SIAM Journal on Optimization}, 30(1):149--181, 2020.

\bibitem{themelis2018forward}
A.~Themelis, L.~Stella, and P.~Patrinos.
\newblock Forward-backward envelope for the sum of two nonconvex functions: Further properties and nonmonotone linesearch algorithms.
\newblock {\em SIAM Journal on Optimization}, 28(3):2274--2303, 2018.

\end{thebibliography}

\end{document}